\documentclass[11pt,a4paper,headings=standardclasses]{scrarticle}
\usepackage[automark]{scrlayer-scrpage}

\usepackage{amsmath,amsthm,amssymb,commath}
\usepackage[font=small,labelfont=bf]{caption}
\usepackage[labelformat=simple]{subcaption}
\usepackage{enumitem}
\usepackage{graphicx}
\usepackage{float}
\usepackage{xcolor}
\usepackage[margin=1 in ]{geometry}
\usepackage{setspace}
\usepackage{tikz}
\usepackage{tikz-cd}
\usetikzlibrary{arrows}
\usetikzlibrary{arrows.meta}
\usetikzlibrary{positioning,calc}
\usetikzlibrary{cd, babel}

\usepackage[colorlinks]{hyperref}
\usepackage[capitalize]{cleveref}

\hypersetup{
	colorlinks=true,
	linkcolor=blue,
	filecolor=blue,
	citecolor = blue,    
	urlcolor=blue,
}

\newtheorem{thm}{Theorem}[section]
\newtheorem{cor}[thm]{Corollary}
\newtheorem{prop}[thm]{Proposition}
\newtheorem{lem}[thm]{Lemma}

\theoremstyle{definition}
\newtheorem{defn}[thm]{Definition}
\newtheorem{exmp}[thm]{Example}

\theoremstyle{remark}
\newtheorem{rem}[thm]{Remark}

\newcommand{\R}{\mathbb{R}}
\newcommand{\Z}{\mathbb{Z}}

\newcommand{\id}{\operatorname{id}}
\newcommand{\im}{\operatorname{im}}
\newcommand{\F}{\mathbb{F}}
\newcommand{\nerve}{\operatorname{\mathcal{N}}}

\newcommand{\VR}{\operatorname{VR}}
\newcommand{\Cech}{\operatorname{\check{C}}}
\newcommand{\Cub}{\operatorname{Cub}}

\newcommand{\Cyc}{\operatorname{Cyc}}
\newcommand{\Torus}{\operatorname{\mathbb{T}}}
\renewcommand{\epsilon}{\varepsilon}
\newcommand{\DC}{\operatorname{DC}}
\newcommand{\RDC}{\operatorname{RDC}}

\title{Distance Matrices of Ordered Point Clouds and Their Persistent Homology}
\author{David Hien}

\begin{document}
\maketitle

\begin{abstract}
	The distance matrix of a finite point cloud can be visualized as a heatmap. When the data arise from a time series, the sublevel sets of this image are known as recurrence plots and are widely used in time series analysis. Motivated by this perspective, we establish a relationship between the distance-matrix filtration of the time series and the \v{C}ech (or Vietoris--Rips) filtration of its state-space embedding in the form of a degree-one chain map.
	We study the induced maps in homology, showing that the map from $H_0$ into $H_1$ is essentially surjective and providing an example where the map from $H_1$ into $H_2$ is nontrivial.
	These chain maps can be applied to simplify image persistence computations arising in the computation of cycling signatures, a topological tool for time series analysis.
	Moreover, these computations yield finer information that allows the analysis of transitions between different types of cycling motion.
\end{abstract}

\section{Introduction}
Analyzing time series data in order to understand an underlying governing process is a fundamental task across scientific fields.
A common phenomenon in such data is recurrence: an observed trajectory returns close to a previously visited state.
Recurrence plots \cite{Eckmann1987recurrenceplots} provide a visual representation of this information. 
Given a time series $x_1,\dots, x_n $ and a metric $ d(x,y) $, the recurrence plot at scale $\epsilon$ is an $n\times n$-pixel black-and-white image; the pixel at position~$(i,j)$ is black if the distance $d(x_i,x_j)$ is less than $\epsilon$ and white otherwise.
Thus, a black pixel away from the diagonal indicates recurrence because the time series visits nearby locations at distinct times.
Recurrence plots are known to reveal qualitative features of the dynamics; for example, different patterns can be related to periodicity, quasi-periodicity, and chaos.
Furthermore, recurrence plots are studied from a quantitative point of view \cite{Marwan2014rqa}.

From the viewpoint of topological data analysis, $\epsilon$ can be regarded as a filtration parameter.
More precisely, the distance matrix of the time series defines a filtration, where the recurrence plot at scale $\epsilon$ is the image at that parameter. 
Recent work by Ichinomiya \cite{Ichinomiya2023,Ichinomiya2025} describes an experimental pipeline for time series analysis based on the persistent homology of the recurrence plot filtration.

At the same time, the embedded points $x_1,\dots,x_n$ in state space also give rise to filtrations such as the offset, \v{C}ech, and Vietoris--Rips filtrations.
Even though they are derived from the same time series, a recurrence plot filtration and the point cloud traced out by the time series are a priori topologically unrelated.

The main contribution of this paper is a theoretical link between the distance matrix and spatial filtrations in the form of a degree-one chain map.
Consider an ordered point cloud $\Gamma=(x_1,\dots,x_n)$ in a metric space $(X,d)$. Our main examples are time series obtained by sampling curves, such as solutions of differential equations.
The basic observation of this paper is that close returns in the recurrence plot naturally define 1-cycles in the chain complex of the geometric complex of the embedded point cloud. 
This observation can be turned into a chain map as follows. 
The Rips distance complex (\cref{def:rips-distance-complex}) of $\Gamma$ is the $\mathcal{V}$-construction of the upper triangular part of the distance matrix. 
This means every matrix entry $(i,j)$ with $i\leq j$ becomes a vertex, with filtration value equal to the respective matrix entry.
Horizontal and vertical edges between neighboring entries are added whenever possible, as are squares $[i,i+1]\times [j,j+1]$.
For filtration values $ r > S_\Gamma = \max_{1\leq i<n} d(x_i,x_{i+1}) $, we construct a chain map as follows:
Given a vertex $(i,j)$ in $\RDC_r(\Gamma)$ (which corresponds to a close return in the recurrence plot), we assign the cycle in the Vietoris--Rips complex that follows the order of the point cloud via $ x_i,x_{i+1},\dots, x_j $ and uses the edge between $x_i$ and $x_j$ to close the cycle.
This extends a degree-one chain map
\[
	\tilde \psi \colon C_\bullet(\RDC(\Gamma)) \longrightarrow C_{\bullet+1}(\VR(\Gamma))
\]
which we call the distance-to-Rips map.
As we show (see \cref{prop:rdc-is-dc}), the construction of the Rips distance complex and the distance-to-Rips map can be regarded as special cases of the more general constructions of the \v{C}ech distance complex (\cref{def:distance-complex}) and the distance-to-\v{C}ech map (\cref{def:distance-to-cech-map}):
\[
	\psi\colon \DC(\Gamma) \rightarrow \Cech(\Gamma).
\]
A theoretical link between the constructions is provided by the Kuratowski embedding, which embeds a point cloud (or, more generally, a compact metric space) into an ambient space where \v{C}ech and Vietoris--Rips complexes agree up to a rescaling \cite{LimMemoliOkutan2024}.

From a practical point of view, stability of these constructions is a desirable property.
We show that the distance complexes and distance-to-\v{C}ech maps of $\epsilon$-perturbed point clouds are $\epsilon$-interleaved.
It follows that the induced persistence morphisms are $\epsilon$-interleaved, as are their kernel, image, and cokernel persistence modules. 
With field coefficients, the associated barcodes are then stable with respect to the bottleneck distance.

We then study the induced maps in homology. The first main result is that the map
\[
	H_0(\psi) \colon H_0(\DC(\Gamma)) \longrightarrow H_{1}(\Cech(\Gamma))
\]
is surjective. 
This has an immediate computational application:
If one wants to compute the image persistence \cite{CohenSteiner2009kernels} of a morphism $ \phi$ with a Vietoris--Rips or a \v{C}ech complex as its domain, one can precompose with $ \psi $ and compute $ \im H_0(\phi\circ \psi) $ instead.
The map $H_0(\psi)$ need not be injective, even after removing the component corresponding to the diagonal.
This is interesting in the context of time series analysis:
While every homological feature is generated by close returns, distinct close returns can wrap around the same feature and thus exhibit the same `homological' behavior.

While the map $ H_0(\psi) $ is fairly transparent, the behavior of $ H_1(\psi) $ is more subtle;
we explore it through examples. 
A time series that winds around an $S^2$ provides an example of a nontrivial $ H_1(\psi) $.
Unlike in degree zero, the map $ H_1(\psi) $ is generally not surjective.

The distance complex and the distance-to-\v{C}ech map can be applied to time series analysis in the context of cycling signatures \cite{Bauer2025cyclingSignatures}.
Given a time series (i.e., an ordered point cloud) $ \Gamma $ and a neighborhood $Y$ of $\Gamma$, the cycling signature of a segment $\gamma$ of $\Gamma$ is defined as the image of the inclusion-induced map
\[
	H_1(O(\gamma) \rightarrow Y).
\]
The idea of the cycling signature is to compare temporal information in the form of segments with spatial topological information captured in the space $Y$.

We can make use of the distance-to-\v{C}ech map in three ways.
First, after suitable discretization, the map in the definition of the cycling signature has a \v{C}ech filtration as its domain.
We can thus compute the cycling signature by precomposing this map with the distance-to-\v{C}ech map.
This yields a substantial advantage in some examples.
Second, and more importantly, the cycles identified by the distance-to-\v{C}ech map provide additional information that is not captured by the signatures alone.
We demonstrate this by analyzing transitions between different cycling motions in a three-wing chaotic attractor.
Third, the chain map yields insight into the relationship among cycling signatures, recurrence plots, and the pipeline of \cite{Ichinomiya2023,Ichinomiya2025}.

In future work, we plan to extend the notions of the distance complex and the distance-to-\v{C}ech map to continuous curves.
We note that some relevant definitions have already appeared in \cite{diss}.
One reason for this is to obtain better stability results. 
Another possible extension, essentially also suggested in \cite{binnie2026chordaldistancetransform}, is to generalize these constructions to distance triples, for example by using the volume of the enclosed simplex or the radius of the minimal enclosing ball.
Furthermore, it would be interesting to have a better understanding of how nontrivial degree-one maps arise.
On the practical side, it would be interesting to explore the combination of the distance-to-\v{C}ech map with Ichinomiya's pipeline without the cycling signature as an intermediate step.

\subsubsection*{Related Work}
Recurrence plots were introduced in \cite{Eckmann1987recurrenceplots}; 
see \cite{Marwan2007recurrenceplots,Marwan2014rqa} for surveys of recurrence plots and recurrence quantification analysis.
There are several constructions of networks from recurrence plots in the time series literature \cite{ZhangSmall2006,Donner2010recurrencenetworks}.

Persistent homology has been applied to point clouds constructed from time series in a variety of settings \cite{perea2015sliding,gidea2018topological,topaz2015topological,kim2020spatio,kramar2015pattern}.
The setting most closely related to this work is when a point cloud is generated from a sampled trajectory of a dynamical system, for example, in
\cite{Maletic2016persistent,strommen2022climate}.

Sublevel-set persistence of grayscale images is commonly modeled by cubical filtrations; in particular, the dual $\mathcal V$- and $\mathcal T$-constructions and their relationship are studied in \cite{Robins2011,Bleile2022}.
Ichinomiya applies cubical persistent homology to time-ordered distance matrices \cite{Ichinomiya2023} and subsequently uses persistence images \cite{Adams2017persistenceimages} to vectorize the resulting persistence diagrams for classifying time series \cite{Ichinomiya2025}.
Note that the persistence of images differs from image persistence, by which we mean the persistence module obtained as the image of a morphism of persistence modules \cite{CohenSteiner2009kernels}.

In addition to our work and \cite{Ichinomiya2023,Ichinomiya2025}, the persistent homology of distance matrices is studied in \cite{binnie2026chordaldistancetransform} in the context of shape analysis.
There, a persistence module is associated to a loop $S^1 \rightarrow \R^d$ as follows:
First, one maps all pairs $ S^1\times S^1 $ to the distance of the corresponding points on the loop.
Since this map does not depend on the order of the pair, it descends to a map on $F_2(S^1)$, the unordered configuration space of two points in $S^1$.
The resulting map $F_2(S^1) \rightarrow \R$ is called the \emph{chordal distance transform}.
They then pursue a geometric program, relating critical points of the chordal distance transform to the geometry of the loop.
Notably, neither line of work considers linking the filtration of the distance matrix to the corresponding spatial filtration via a chain map.

\subsubsection*{Outline}
In \cref{sec:preliminaries}, we review some standard topological constructions and introduce some notation.
In \cref{sec:distance-complex}, we give the general definition of the distance complex, which is compatible with the \v{C}ech filtration.
In \cref{sec:distance-to-cech-map}, we introduce the distance-to-\v{C}ech map.
We prove stability with respect to pointwise perturbations and explore the induced maps in homology. We furthermore introduce the Rips distance complex and the distance-to-Rips map and relate them to the \v{C}ech setting using the Kuratowski embedding.
In \cref{sec:application}, we apply the distance-to-\v{C}ech map to time series analysis via the cycling signature.

\section{Preliminaries}
\label{sec:preliminaries}

Let $I\subset \R$ be an interval. 
By an $I$-filtered simplicial complex, chain complex, or vector space,
we mean a family $(F_t)_{t\in I}$ of objects of the corresponding type, together with inclusions $F_{s,t}\colon F_s\to F_t$ for every $s<t$ in $I$.
A morphism between such $I$-filtered objects $F$ and $G$ is a collection of maps $F_t\rightarrow G_t$ for every $t\in I$ that commutes with the respective inclusions.

\subsection{Simplicial Complexes}

Throughout this section, let $(X,d)$ be a metric space.
We write $B(x,r)=\{y\in X\mid d(x,y)<r\}$ for the open ball of radius $r$ about $x$.

\begin{defn}
	An \emph{indexed family} in $Z$ is a map $A\colon I\rightarrow Z$, written $A=(z_i)_{i\in I}$.
	Its image is the subset
	\[
		\im A=\{z_i\mid i\in I\}\subset Z.
	\]
\end{defn}

\begin{defn}
	An \emph{indexed cover} of a topological space $Y$ is a family $\mathcal U=(U_i)_{i\in I}$ of subsets of $Y$ such that $Y=\bigcup_{i\in I}U_i$.
	The \emph{nerve} of $\mathcal U$ is the simplicial complex
	\[
		\nerve(\mathcal U) = \left\{ \emptyset\neq\sigma\subset I \;\middle|\; \sigma\text{ is finite and } \bigcap_{i\in\sigma}U_i\neq\emptyset \right\}.
	\]
	An ordinary cover $\mathcal U$ is regarded as the family $(U)_{U\in\mathcal U}$ indexed by itself.
\end{defn}

\begin{defn}
	The \emph{\v{C}ech complex} of an indexed family $A=(x_i)_{i\in I}$ in $X$ at radius $r\geq 0$ is
	\[
		\Cech_r(A) = \nerve\bigl((B(x_i,r))_{i\in I}\bigr).
	\]
	The \v{C}ech filtration is the $\R$-filtered simplicial complex $\Cech(A)=(\Cech_r(A))_{r\geq 0}$.
	If $A\subset X$ is an ordinary subset, we regard it as the family $(a)_{a\in A}$ indexed by itself.
\end{defn}
The indexed \v{C}ech complex may have several vertices corresponding to the same point of $X$.
For example, if $X=\{\ast\}$ and $r>0$, then $\Cech_r((\ast,\ast))$ is an edge, whereas $\Cech_r(\{\ast\})$ is a single vertex.
However, the indexed and unindexed versions of the nerve are homotopy equivalent.
We give a brief argument using contiguity (taken from \cite[Section 5]{Virk2021RipsDowker}).
Recall that simplicial maps $f,g\colon K\rightarrow L$ are contiguous if for every simplex $\sigma$ in $K$, $f(\sigma)\cup g(\sigma)$ is a simplex in $L$.
By \cite[Theorem 12.5]{munkres1984elements}, chain maps induced by contiguous simplicial maps are chain homotopic.

\begin{lem}
	Let $\mathcal U=(U_i)_{i\in I}$ be an indexed cover and
	$\widetilde{\mathcal U}=\im\mathcal U=\{U_i\mid i\in I\}$.
	Then $\nerve(\widetilde{\mathcal U})\simeq\nerve(\mathcal U)$.
\end{lem}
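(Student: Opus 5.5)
We construct simplicial maps in both directions and show that the two composites are contiguous to the respective identities. By \cite[Theorem 12.5]{munkres1984elements} contiguous maps induce chain-homotopic chain maps, and contiguous simplicial maps are homotopic on geometric realizations. This gives the homotopy equivalence $\nerve(\widetilde{\mathcal U})\simeq\nerve(\mathcal U)$, and in particular an isomorphism in homology, which is what is used later.

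Let $p\colon I\to\widetilde{\mathcal U}$ be the surjection $i\mapsto U_i$. Choose a section $s\colon\widetilde{\mathcal U}\to I$ with $U_{s(V)}=V$ for each $V\in\widetilde{\mathcal U}$. Both maps are simplicial on nerves:
\begin{itemize}
	\item For $\sigma\in\nerve(\mathcal U)$, the finite set $p(\sigma)$ has intersection $\bigcap_{i\in\sigma}U_i\neq\emptyset$, so $p(\sigma)\in\nerve(\widetilde{\mathcal U})$.
	\item For $\tau\in\nerve(\widetilde{\mathcal U})$, we have $\bigcap_{V\in s(\tau)}U_{V}=\bigcap_{V\in\tau}V\neq\emptyset$, so $s(\tau)\in\nerve(\mathcal U)$.
\end{itemize}

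Since $p\circ s=\id$, it remains to compare $s\circ p$ with the identity on $\nerve(\mathcal U)$. Take $\sigma\in\nerve(\mathcal U)$. For each $i\in\sigma$, the index $s(p(i))$ satisfies $U_{s(p(i))}=U_i$. Therefore
\[
	\bigcap_{j\in\sigma\cup s(p(\sigma))}U_j=\bigcap_{i\in\sigma}U_i\neq\emptyset,
\]
and $\sigma\cup s(p(\sigma))$ is a finite simplex of $\nerve(\mathcal U)$. So $s\circ p$ and $\id$ are contiguous. Hence $|s|\circ|p|\simeq\id$ and $|p|\circ|s|=\id$, which proves the lemma.

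The only delicate step is the choice of the section $s$ when $I$ is infinite, which uses the axiom of choice. Beyond that, one must note that contiguity implies homotopy of realizations even for infinite complexes. This follows by the straight-line homotopy in the simplex spanned by $f(\sigma)\cup g(\sigma)$, which is continuous on each closed simplex and hence on the weak topology. In the paper's applications $I$ is finite, so neither issue arises.
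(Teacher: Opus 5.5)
Your proposal is correct and follows essentially the same argument as the paper: choose a section $s$ of $p\colon i\mapsto U_i$, note $p\circ s=\id$, and show $s\circ p$ is contiguous to the identity via $\bigcap_{i\in\sigma}U_i=\bigcap_{i\in\sigma\cup(s\circ p)(\sigma)}U_i$. Your extra checks are sound additions that the paper leaves implicit: that $p$ and $s$ are simplicial, and that contiguity gives a homotopy of realizations even for infinite $I$.
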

\begin{proof}
	The map
	$p\colon I\rightarrow\widetilde{\mathcal U}$, $i\mapsto U_i$, is
	surjective.
	We can thus find a section
	$s\colon\widetilde{\mathcal U}\rightarrow I$ such that
	$p\circ s=\id_{\widetilde{\mathcal U}}$.
	Both maps induce simplicial maps on the corresponding nerves.
	Furthermore, $s\circ p$ and $\id_{\nerve(\mathcal U)}$ are contiguous:
	for every $\sigma\in\nerve(\mathcal U)$,
	\[
		\bigcap_{i\in\sigma}U_i
		=
		\bigcap_{i\in\sigma\cup(s\circ p)(\sigma)}U_i.
	\]
	Therefore, $p$ and $s$ give the required homotopy equivalence.
\end{proof}

\begin{defn}
	The \emph{Vietoris--Rips complex} of an indexed family $A=(x_i)_{i\in I}$ at radius $r$ is the simplicial complex
	\[
		\VR_r(A) = \{ \emptyset\neq \sigma\subset I\mid \sigma \text{ finite } \text{ and } \max_{i,j\in \sigma} d(x_i,x_j) < r \}.
	\]
	The Vietoris--Rips filtration is the $\R$-filtered simplicial complex $\VR(A)=(\VR_r(A))_{r\geq 0}$.
	As above, an ordinary subset $A\subset X$ is regarded as the family $(a)_{a\in A}$ indexed by itself.
\end{defn}

\subsection{Cubical Complexes}
\label{subsec:cubical-complexes}

We briefly recall cubical (grid) complexes in dimension two and refer to \cite{kaczynski2004computational} for details.

An interval of the form $I = [k,k+1]$, $k\in \Z$, is called a non-degenerate elementary interval.
An interval of the form $I = [k,k]$, $k\in \Z$, is called a degenerate elementary interval, and we sometimes simply write $[k]$. An elementary cube in $\R^2$ is a set of the form
\[
	Q = I_1\times I_2
\]
where $I_1$ and $I_2$ are elementary intervals. 
Its dimension is the number of non-degenerate intervals $I_k$.
An elementary cube $P$ is a face of an elementary cube $Q$ if $P\subset Q$.

The set of all elementary cubes in $\R^2$ is
\[
	\Cub(\R^2) = \bigcup_{k=0}^2 \Cub_k(\R^2),
\]
where $\Cub_{k}(\R^2)$ denotes the set of all such $ k $-dimensional elementary cubes.

A set $K\subset \Cub(\R^2)$ is called a \emph{cubical (grid) complex} if $K $ contains the faces of all its cubes. Its geometric realization is $|K| = \bigcup_{Q\in K} Q$.

Fix a coefficient ring $R$ and let $K$ be a cubical complex.
We denote by $C_k(K)$ the free $R$-module generated by the $k$-dimensional elementary cubes in $K$.
The boundary operator $\partial\colon C_k(K) \rightarrow C_{k-1}(K)$ is the linear extension of the following operations:
\begin{itemize}
	\item $ \partial_0 \colon C_0(K) \rightarrow C_{-1}(K)$, where $\partial_0 \left([i] \times [j]\right) = 0$.
	
	\item $ \partial_1 \colon C_1(K) \rightarrow C_{0}(K)$, where
	\begin{itemize}
		\item $\partial_1 \left( [i] \times [j,j+1]\right) = [i]\times [j+1] - [i]\times [j]$, and
		\item $\partial_1 \left( [i,i+1] \times [j]\right) = [i+1]\times [j] - [i]\times [j]$.
	\end{itemize}
	
	\item $ \partial_2 \colon C_2(K) \rightarrow C_{1}(K)$, where
	\begin{align*}
		\partial_2([i,i+1] \times [j,j+1])
		&= [i+1]\times [j,j+1] - [i]\times [j,j+1] \\
		&\quad \quad - [i,i+1]\times [j+1] + [i,i+1]\times [j].
	\end{align*}
\end{itemize}
Together with the $ C_k(K) $, this defines the cubical chain complex of $K$. 
It can be shown that the cubical homology of a cubical complex is canonically isomorphic to the cellular homology of $|K|$.

\section{The Distance Complex}\label{sec:distance-complex}

The distance complex filtration captures the sublevel set persistent homology of the distance matrix of an ordered point cloud in a way that is consistent with the \v{C}ech complex.
Related constructions have appeared in \cite{Ichinomiya2025} and in \cite{binnie2026chordaldistancetransform};
see \cref{rem:comparison-with-ichinomiya} and the introduction for details.

Throughout this section, $ (X,d) $ denotes a metric space.
We abstract time series via the notion of an ordered point cloud.

\begin{defn}
	An \emph{ordered point cloud} in $(X,d)$ is an indexed family $\Gamma=(x_i)_{i\in I}$ where $(I,<)$ is totally ordered.
\end{defn}

Suppose $\Gamma=(x_1,\dots,x_n)$ is an ordered point cloud in $(X,d)$.
We associate a filtered cubical grid complex to the distance matrix $D_{i,j}=d(x_i,x_j)$ as follows.
We take a vertex for every entry in the matrix and assign it a filtration value equal to half the matrix entry.
We add the horizontal edge between $ (i,j) $ and $(i+1,j)$ once the simplex $\{i,i+1,j\}$ is contained in $\Cech_r(\Gamma)$, or, equivalently,  once the triple intersection
\[
	B(x_i,r) \cap B(x_{i+1}, r) \cap B(x_j, r)
\]
is nonempty. We do the same for vertical edges between $ (j,i) $ and $(j, i+1)$.
Lastly, we add the square with corners $ (i,j) $ and $(i+1,j+1)$ if the corresponding index set $\{i,i+1,j,j+1\}$ spans a simplex in $\Cech_r(\Gamma)$.
This is summarized in the following definition.

\begin{defn}\label{def:distance-complex}
	Let $\Gamma=(x_1,\dots,x_n)$ be an ordered point cloud in a metric space $(X,d)$.
	The \emph{(upper right) distance complex} of $\Gamma$ is the filtered cubical complex
	\[
	\DC(\Gamma)=\bigl(\DC_r(\Gamma)\bigr)_{r\geq 0},
	\]
	where
	\begin{equation}\label{eq:distance-complex}
		\DC_r(\Gamma) = \left\{ [i,j]\times [k,l] \in \Cub([1,n]^2) \mid j\leq k \text{ and }
		\{ i,j,k,l \} \in \Cech_r(\Gamma) \right\}.
	\end{equation}
\end{defn}

The condition $j\le k$ ensures that every vertex of the elementary cube is contained in the upper-triangular part of the grid.

\begin{exmp}
	Let $\Gamma=(x_1,\dots,x_6)$ be the time series obtained by traversing the vertices of a regular hexagon inscribed in the unit circle counterclockwise, as shown in \cref{fig:dist-complex-example}. Consider $r=1$.
	The \v{C}ech complex has all edges except between the three opposite pairs, and a triangle for each triple of consecutive points.
	Therefore, the distance complex is the upper triangular part of the matrix, with three gray vertices corresponding to the opposite pairs.
	Furthermore, it is easy to check that all possible edges and squares between the black vertices are present.
	For example, the four squares of the form $ [i,i+1]\times [i+1,i+2]$ correspond to the intersections
	\[
		B(x_i,r) \cap B(x_{i+1}, r) \cap B(x_{i+1}, r) \cap B(x_{i+2}, r), \qquad i = 1,2,3,4.
	\]
	Note that two balls in this quadruple intersection are the same.
\end{exmp}

From a practical point of view, stability is an important property.
Stability of the distance complex does not come as a surprise, given that the \v{C}ech filtration is stable \cite{Chazal2009}.
In this work, we only consider pointwise perturbations of the ordered point cloud with respect to the supremum distance
\[
	d_\infty(\Gamma,\Gamma') = \max_{1\leq i\leq n} d(x_i,x_i'),
\]
where $ \Gamma = (x_1,\dots, x_n) $ and $ \Gamma' = (x_1',\dots, x_n') $ are time series in $(X,d)$.
This significantly simplifies the analysis since we need only consider a single correspondence between the point clouds instead of the collection of all possible correspondences.

\begin{rem}
	In the context of continuous curves, stability results with respect to the Fréchet distance were obtained in \cite{binnie2026chordaldistancetransform}.
	While these results can be adapted to the distance complex without too much work, applying them to the distance-to-\v{C}ech map is not straightforward.
	We therefore restrict our attention to pointwise perturbations.	
\end{rem}

\begin{lem}[Stability]\label{lem:distance-complex-stability}
	Let $\Gamma_1 $ and $\Gamma_2 $ be ordered point clouds of length $n$ in a metric space $(X,d)$. 
	Let $\varepsilon = d_\infty(\Gamma_1,\Gamma_2)$.
	Then
	\[
		\DC_r(\Gamma_1)\subset \DC_{r+\varepsilon}(\Gamma_2) \qquad \text{ and }\qquad \DC_r(\Gamma_2)\subset \DC_{r+\varepsilon}(\Gamma_1).
	\]
\end{lem}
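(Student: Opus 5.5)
By symmetry of the hypothesis in $\Gamma_1$ and $\Gamma_2$, it suffices to prove the first inclusion $\DC_r(\Gamma_1)\subset \DC_{r+\varepsilon}(\Gamma_2)$; the second follows by exchanging the roles of the two point clouds. Write $\Gamma_1=(x_1,\dots,x_n)$ and $\Gamma_2=(x_1',\dots,x_n')$, so that $d(x_m,x_m')\leq\varepsilon$ for every index $m$.

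The key observation is that the defining condition in \eqref{eq:distance-complex} has two parts. The combinatorial part, namely that $[i,j]\times[k,l]$ is an elementary cube in $[1,n]^2$ with $j\leq k$, does not depend on the point cloud at all. Hence the whole statement reduces to a simplexwise inclusion of \v{C}ech complexes. We must show that for every index set $\sigma\subset\{1,\dots,n\}$ (here $\sigma=\{i,j,k,l\}$, possibly with repetitions collapsed), $\sigma\in\Cech_r(\Gamma_1)$ implies $\sigma\in\Cech_{r+\varepsilon}(\Gamma_2)$.

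For this, take $\sigma\in\Cech_r(\Gamma_1)$, so there is a witness point $y\in X$ with $d(y,x_m)<r$ for all $m\in\sigma$. By the triangle inequality,
\[
	d(y,x_m')\leq d(y,x_m)+d(x_m,x_m') < r+\varepsilon
\]
for all $m\in\sigma$. Thus the same $y$ lies in $\bigcap_{m\in\sigma}B(x_m',r+\varepsilon)$, so $\sigma\in\Cech_{r+\varepsilon}(\Gamma_2)$. Since both point clouds are indexed by the same set and we use the identity correspondence on indices, the cube $[i,j]\times[k,l]$ belongs to $\DC_{r+\varepsilon}(\Gamma_2)$.

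No step is a real obstacle. The only point needing care is that the strict inequality is preserved, which it is because $d(y,x_m)<r$ is strict and $d(x_m,x_m')\leq\varepsilon$. It is also worth being careful that the indexed \v{C}ech complex is used, so that repeated indices such as $j=k$ cause no issue; the argument above is indifferent to repetitions.
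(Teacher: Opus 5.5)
Your proof is correct and follows the paper's argument: the paper also takes a witness point $y$ in the common intersection of the $r$-balls, applies the triangle inequality to get $\sigma\in\Cech_{r+\varepsilon}(\Gamma_2)$, and concludes from \eqref{eq:distance-complex}. Your remarks that the cube condition $j\leq k$ does not depend on the point cloud, and that the strict inequality survives, are exactly the points the paper's short proof leaves implicit.
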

\begin{proof}
	If $\sigma\in\Cech_r(\Gamma_1)$, choose $y\in\bigcap_{i\in\sigma}B(\Gamma_1(i),r)$.
	For every $i\in\sigma$, the triangle inequality gives
	$d(y,\Gamma_2(i))<r+\varepsilon$, so $\sigma\in\Cech_{r+\varepsilon}(\Gamma_2)$.
	The claim now follows from \cref{eq:distance-complex}.
\end{proof}

In this work, we are mainly concerned with ordered point clouds that arise from sampling a continuous curve.

\begin{defn}
	The \emph{sample radius} of an ordered point cloud $\Gamma = (x_1,\dots,x_n)$ is
	\[
		R_\Gamma = \inf \{r\geq 0 \mid B(x_{i-1},r) \cap B(x_i,r) \neq \emptyset \text{ for all } 2\leq i\leq n\}.
	\]
\end{defn}

In a Euclidean space, the sample radius is half the maximal distance between consecutive points.

\begin{lem}
	Let $\Gamma=(x_1,\dots,x_n)$ be an ordered point cloud with sample radius $R_\Gamma$.
	\begin{enumerate}[label = (\roman*)]
		\item For all $r>0$ and $1\leq i\leq n$, we have $ [i,i]\times [i,i] \in \DC_r(\Gamma)$.
		\item For $ r > R_\Gamma $ and $1\leq i<n$, we have
		\[
			[i,i]\times [i,i+1] \in \DC_r(\Gamma) \qquad \text{and}\qquad [i,i+1]\times [i+1,i+1] \in \DC_r(\Gamma).
		\]
		\item For $ r > R_\Gamma $, all diagonal vertices $ [i]\times [i]$, $i = 1,\dots, n$, are in the same connected component of $ \DC_r(\Gamma) $.
	\end{enumerate}
\end{lem}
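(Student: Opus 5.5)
The plan is to unwind \cref{def:distance-complex} in each case. An elementary cube $[i,j]\times[k,l]$ lies in $\DC_r(\Gamma)$ exactly when $j\le k$ and the index set $\{i,j,k,l\}$ is a simplex of $\Cech_r(\Gamma)$. So each part reduces to checking that a small index set spans a \v{C}ech simplex. The index sets here are subsets of $I$, so repeated indices collapse, as in the hexagon example.

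For (i), the cube $[i,i]\times[i,i]$ satisfies $i\le i$ and has index set $\{i\}$. For $r>0$ we have $x_i\in B(x_i,r)$, so this set is a vertex of $\Cech_r(\Gamma)$.

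For (ii), both cubes have index set $\{i,i+1\}$. The ordering conditions hold as $i\le i$ and $i+1\le i+1$, respectively. It remains to see that $B(x_i,r)\cap B(x_{i+1},r)\neq\emptyset$ when $r>R_\Gamma$. The one point needing care is that $R_\Gamma$ is an infimum: we must rule out that it fails to be attained in the wrong direction. The set $\{r\ge 0\mid B(x_{i-1},r)\cap B(x_i,r)\neq\emptyset \text{ for all } i\}$ is upward closed, since the balls grow with $r$. Hence any $r>R_\Gamma$ exceeds some element of the set and therefore belongs to it. This gives the nonempty pairwise intersections for all consecutive indices simultaneously.

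For (iii), fix $r>R_\Gamma$. By (ii), the edge $[i,i]\times[i,i+1]$ joins $[i]\times[i]$ to $[i]\times[i+1]$. The edge $[i,i+1]\times[i+1,i+1]$ joins $[i]\times[i+1]$ to $[i+1]\times[i+1]$. Their endpoints are vertices of $\DC_r(\Gamma)$ because the complex is closed under faces (or directly by (i)). So consecutive diagonal vertices are connected by a path of length two, and induction on $i$ puts all $[i]\times[i]$ in one component.

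No step is a real obstacle. The only subtle points are the upward-closedness argument for the infimum in (ii) and remembering that repeated indices collapse in the index set.
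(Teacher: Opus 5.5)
Your proposal is correct and follows the same approach as the paper, which calls (i) and (ii) immediate from the definition and obtains (iii) from the edges in (ii). You simply spell out the details, including the useful upward-closedness remark about the infimum defining $R_\Gamma$. One small point: the parenthetical ``or directly by (i)'' covers only the diagonal endpoints, not $[i]\times[i+1]$, but your main justification (closure of $\DC_r(\Gamma)$ under faces) already handles that vertex.
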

\begin{proof}
	\emph{(i)} and \emph{(ii)} are immediate. For \emph{(iii)}, note that the edges in \emph{(ii)} connect the diagonal entries.
\end{proof}

The diagonal entries of a distance matrix are zero; therefore, the diagonal vertices are in the filtration for $r>0$.
If the time series is sampled finely enough, the vertices near the diagonal also enter at a small scale, connecting all the diagonal vertices.
Off-diagonal components correspond to close returns, i.e., pairs of distant time indices whose states are close in $X$.

\section{The Distance-to-\v{C}ech Map}
\label{sec:distance-to-cech-map}

The distance complex and the \v{C}ech complex encode different aspects of the same data. 
The complex $\DC_r(\Gamma)$ is organized by pairs of time indices and records where the trajectory returns close to itself. 
The complex $\Cech_r(\Gamma)$ has vertices indexed by the time indices and records the geometry of the corresponding balls in state space. 
In this section, we relate these complexes.

Throughout this section, we fix an ordered point cloud $\Gamma=(x_1,\dots,x_n)$ in a metric space $(X,d)$.
We furthermore consider simplicial homology (and the associated chain complex) with coefficients in an arbitrary ring.
By a degree-one chain map between chain complexes $C$ and $D$, we mean a chain map that maps $ C_k\rightarrow D_{k+1} $ and commutes with the boundary operator.

\begin{defn}
	For $1\leq i\leq j\leq n$, define the \emph{curve chain}
	\[
	c_\Gamma(i,j) = \sum_{k=i+1}^j [k-1,k].
	\]
\end{defn}
The curve chain `traces' the curve from $x_i$ to $x_j$ and has the endpoints as its boundary.
For $r>R_\Gamma$, this chain is contained in $C(\Cech_r(\Gamma))$ and satisfies $ \partial c_\Gamma(i,j) = [j] - [i]$.

\begin{figure}
	\centering
	\begin{subfigure}[b]{0.32\textwidth}
		\centering
		\begin{tikzpicture}[scale=.6]
			\begin{scope}
				\path (-3,-3) rectangle (3,3);
				
				\draw[->,thick] (-2.8,0) -- (2.8,0);
				\draw[->,thick] (0,-2.8) -- (0,2.8);
				
				\draw[lightgray,thin] (0,0) circle (1.7);
				
				\foreach \k in {0,...,5} {
					\coordinate (P\k) at ({1.7*cos(60*\k)},{1.7*sin(60*\k)});
					\fill (P\k) circle (4pt);
				}
				
				\draw[gray,dashed,thick]
				(P0) -- node[midway,below] {\tiny $\sqrt{3}$} (P2);
				
				\node[above right]       at (P0) {\footnotesize $x_1$};
				\node[above right] at (P1) {\footnotesize $x_2$};
				\node[above left]  at (P2) {\footnotesize $x_3$};
				\node[above left]        at (P3) {\footnotesize $x_4$};
				\node[below left]  at (P4) {\footnotesize $x_5$};
				\node[below right] at (P5) {\footnotesize $x_6$};
				
			\end{scope}
		\end{tikzpicture}
		\caption{$\Gamma$}
	\end{subfigure}\hfill
	\begin{subfigure}[b]{0.32\textwidth}
		\centering
		\begin{tikzpicture}[scale=.6,
			defaultnode/.style={circle, fill=black, inner sep=2pt},
			specialnode/.style={circle, fill=lightgray, inner sep=2pt},
			thickline/.style={thick}
			]
			
			\fill[gray!50] (2, -1) -- (3, -1) -- (3, -2) -- (2, -2) -- cycle;
			\fill[gray!50] (3, -2) -- (4, -2) -- (4, -3) -- (3, -3) -- cycle;
			\fill[gray!50] (4, -3) -- (5, -3) -- (5, -4) -- (4, -4) -- cycle;
			\fill[gray!50] (5, -4) -- (6, -4) -- (6, -5) -- (5, -5) -- cycle;
			
			\fill[gray!20] (3, -1) -- (4, -1) -- (4, -2) -- (3, -2) -- cycle;
			\fill[gray!20] (4, -1) -- (5, -1) -- (5, -2) -- (4, -2) -- cycle;
			\fill[gray!20] (5, -1) -- (6, -1) -- (6, -2) -- (5, -2) -- cycle;
			
			\fill[gray!20] (4, -2) -- (5, -2) -- (5, -3) -- (4, -3) -- cycle;
			\fill[gray!20] (5, -2) -- (6, -2) -- (6, -3) -- (5, -3) -- cycle;
			
			\fill[gray!20] (5, -3) -- (6, -3) -- (6, -4) -- (5, -4) -- cycle;
			
			\newcommand{\isspecial}[2]{
				\def\result{0}
				\ifnum#1=1\relax\ifnum#2=4\relax\def\result{1}\fi\fi
				\ifnum#1=2\relax\ifnum#2=5\relax\def\result{1}\fi\fi
				\ifnum#1=3\relax\ifnum#2=6\relax\def\result{1}\fi\fi
			}
			
			\foreach \i in {1,...,6} {
				\foreach \j in {1,...,6} {
					\ifnum\i>\j
					\else
					\isspecial{\i}{\j}
					\ifnum\result=1
					\node[specialnode] (n\i\j) at (\j, -\i) {};
					\else
					\node[defaultnode] (n\i\j) at (\j, -\i) {};
					\fi
					\fi
				}
			}

			\draw[thickline] (n11) -- (n12);
			\draw[thickline] (n12) -- (n13);
			\draw[thickline,lightgray] (n13) -- (n14);
			\draw[thickline,lightgray] (n14) -- (n15);
			\draw[thickline] (n15) -- (n16);
			
			\draw[thickline] (n22) -- (n23);
			\draw[thickline] (n23) -- (n24);
			\draw[thickline,lightgray] (n24) -- (n25);
			\draw[thickline,lightgray] (n25) -- (n26);
			
			\draw[thickline] (n33) -- (n34);
			\draw[thickline] (n34) -- (n35);
			\draw[thickline,lightgray] (n35) -- (n36);
			
			\draw[thickline] (n44) -- (n45);
			\draw[thickline] (n45) -- (n46);
			
			\draw[thickline] (n55) -- (n56);
			
			\draw[thickline] (n12) -- (n22);
			\draw[thickline] (n13) -- (n23);
			\draw[thickline,lightgray] (n14) -- (n24);
			\draw[thickline,lightgray] (n15) -- (n25);
			\draw[thickline] (n16) -- (n26);
			
			\draw[thickline] (n23) -- (n33);
			\draw[thickline] (n24) -- (n34);
			\draw[thickline,lightgray] (n25) -- (n35);
			\draw[thickline,lightgray] (n26) -- (n36);
			
			\draw[thickline] (n34) -- (n44);
			\draw[thickline] (n35) -- (n45);
			\draw[thickline,lightgray] (n36) -- (n46);
			
			\draw[thickline] (n45) -- (n55);
			\draw[thickline] (n46) -- (n56);
			
			\draw[thickline] (n56) -- (n66);
			
			\draw[thickline]
			($(1,-0.5)$) -- ++(-0.5,0) -- ++(0,-6.1) -- ++(0.5,0);
			\draw[thickline]
			($(6,-0.5)$) -- ++(0.5,0) -- ++(0,-6.1) -- ++(-0.5,0);
			
		\end{tikzpicture}
		\caption{$\DC_r(\Gamma)$ at $ r = 1$}
	\end{subfigure}\hfill
	\begin{subfigure}[b]{0.32\textwidth}
		\centering
		\begin{tikzpicture}[scale=.6,
			pt/.style={circle,fill=black,inner sep=2pt},
			edge/.style={thick},
			]
			\begin{scope}
				\path (-3,-3) rectangle (3,3);
				
				\foreach \k in {0,...,5} {
					\coordinate (P\k) at ({1.7*cos(60*\k)},{1.7*sin(60*\k)});
				}
				
				\fill[gray!60,opacity=0.35] (P0)--(P1)--(P2)--cycle;
				\fill[gray!60,opacity=0.35] (P1)--(P2)--(P3)--cycle;
				\fill[gray!60,opacity=0.35] (P2)--(P3)--(P4)--cycle;
				\fill[gray!60,opacity=0.35] (P3)--(P4)--(P5)--cycle;
				\fill[gray!60,opacity=0.35] (P4)--(P5)--(P0)--cycle;
				\fill[gray!60,opacity=0.35] (P5)--(P0)--(P1)--cycle;
				
				\draw[edge] (P0)--(P1);
				\draw[edge] (P1)--(P2);
				\draw[edge] (P2)--(P3);
				\draw[edge] (P3)--(P4);
				\draw[edge] (P4)--(P5);
				\draw[edge] (P5)--(P0);
				
				\draw[edge] (P0)--(P2);
				\draw[edge] (P1)--(P3);
				\draw[edge] (P2)--(P4);
				\draw[edge] (P3)--(P5);
				\draw[edge] (P4)--(P0);
				\draw[edge] (P5)--(P1);
				
				\foreach \k in {0,...,5} {
					\node[pt] at (P\k) {};
				}
				
				\node[right]       at (P0) {\footnotesize $1$};
				\node[above right] at (P1) {\footnotesize $2$};
				\node[above left]  at (P2) {\footnotesize $3$};
				\node[left]        at (P3) {\footnotesize $4$};
				\node[below left]  at (P4) {\footnotesize $5$};
				\node[below right] at (P5) {\footnotesize $6$};
			\end{scope}
		\end{tikzpicture}
		\caption{$\Cech_r(\Gamma)$ at $r = 1$.}
	\end{subfigure}
	\caption{Time series with distance complex and \v{C}ech complex.
		\textbf{(a)} $ \Gamma = (x_1,\dots, x_6)$ consists of six points on the unit circle. 
		Note that in panel \textbf{(b)} the y-axis is reversed; this way, the vertices of $\DC(\Gamma)$ are at the respective matrix positions. 
		\textbf{(c)} The \v{C}ech complex is built on the time-index set of $\Gamma$; its simplices record nonempty intersections of the corresponding balls in state space. }
	\label{fig:dist-complex-example}
\end{figure}

\subsection{Definition of the Distance-to-\v{C}ech Map}

For $r>R_\Gamma$, we define a degree-one homomorphism
\begin{equation}
	\psi_r \colon C_\bullet(\DC_r(\Gamma))\rightarrow C_{\bullet+1}(\Cech_r(\Gamma)).
	\label{eq:phi-dg-to-vr}
\end{equation}
For any vertex $ v = (i,j)$ in $ \DC_r(\Gamma) $, we set 
\begin{equation}
	\psi_r(v) = c_\Gamma(i,j) - [i,j].
	\label{eq-phi-dg-to-vr-0}
\end{equation}
For any edge $e $ in $ \DC_r(\Gamma) $, we set
\begin{equation}
	\psi_r(e) = \begin{cases}
		[i,j,i+1], & \text{ if } e = [i,i+1]\times[j,j], \\
		[i,j,j+1], & \text{ if } e = [i,i]\times[j,j+1], \\
	\end{cases}.
	\label{eq-phi-dg-to-vr-1}
\end{equation}
For any square $ q = [i,i+1]\times [j,j+1] \in \DC_r(\Gamma) $, we set
\begin{equation}
	\psi_r(q) = [i,i+1,j,j+1].
	\label{eq-phi-dg-to-vr-2}
\end{equation}
Here $ [\dots] $ denotes an oriented simplex, and any degenerate simplex appearing in these formulas is interpreted as zero.
We then extend $ \psi_r $ via linearity. 

\begin{defn}\label{def:distance-to-cech-map}
	We call $\psi$ the \emph{distance-to-\v{C}ech map} of $\Gamma$.
\end{defn}

The formulas on edges and squares are chosen so that $\psi$ commutes with the boundary operator, as illustrated in \cref{fig:chain-map-discrete}.

\begin{figure}
	\centering
	\begin{tikzpicture}[scale=.8,
		defaultnode/.style={circle, fill=lightgray, inner sep=2pt},
		specialnode/.style={circle, fill=black, inner sep=2pt},
		textnode/.style={inner sep=0pt, outer sep=0pt, font=\scriptsize}
		]
		
		\def\xoffset{1}
		\def\yoffset{1}
		
		\foreach \i in {1,2,3} {
			\foreach \j in {1,2,3} {
				\pgfmathtruncatemacro{\x}{\j + \xoffset}
				\pgfmathtruncatemacro{\y}{\i + \yoffset}
				\coordinate (c\i\j) at (\x, -\y);
			}
		}
		
		\foreach \i in {1,2} {
			\foreach \j in {1,2} {
				\path[fill=lightgray!20, draw=none]
				(c\i\j) -- (c\i\the\numexpr\j+1\relax) --
				(c\the\numexpr\i+1\relax\the\numexpr\j+1\relax) --
				(c\the\numexpr\i+1\relax\j) -- cycle;
			}
		}
		
		\foreach \i in {1,2,3} {
			\foreach \j in {1,2} {
				\draw[lightgray,thick] (c\i\j) -- (c\i\the\numexpr\j+1\relax);
			}
		}
		\foreach \j in {1,2,3} {
			\foreach \i in {1,2} {
				\draw[lightgray,thick] (c\i\j) -- (c\the\numexpr\i+1\relax\j);
			}
		}
		
		\foreach \i in {1,2,3} {
			\foreach \j in {1,2,3} {
				\ifnum\i=2\relax
				\ifnum\j=2
				\node[specialnode] (n\i\j) at (c\i\j) {};
				\else
				\node[defaultnode] (n\i\j) at (c\i\j) {};
				\fi
				\else\ifnum\i=3\relax
				\ifnum\j=2
				\node[specialnode] (n\i\j) at (c\i\j) {};
				\else
				\node[defaultnode] (n\i\j) at (c\i\j) {};
				\fi
				\else
				\node[defaultnode] (n\i\j) at (c\i\j) {};
				\fi\fi
			}
		}
		
		\node[textnode, above right=0pt of n22] {$d_{i,j}$};
		\node[textnode, above right=0pt of n32] {$d_{i+1,j}$};
		
		\draw[thick] (n22) -- (n32);
		
		\node at (1.0, -\yoffset - 2) {$i$};
		
		\node at (\xoffset + 2, -4.8) {$j$};
		
		\node[anchor=west, xshift=2.5cm] at (6.5, -3) (codomain)
		{\includegraphics[width=6cm]{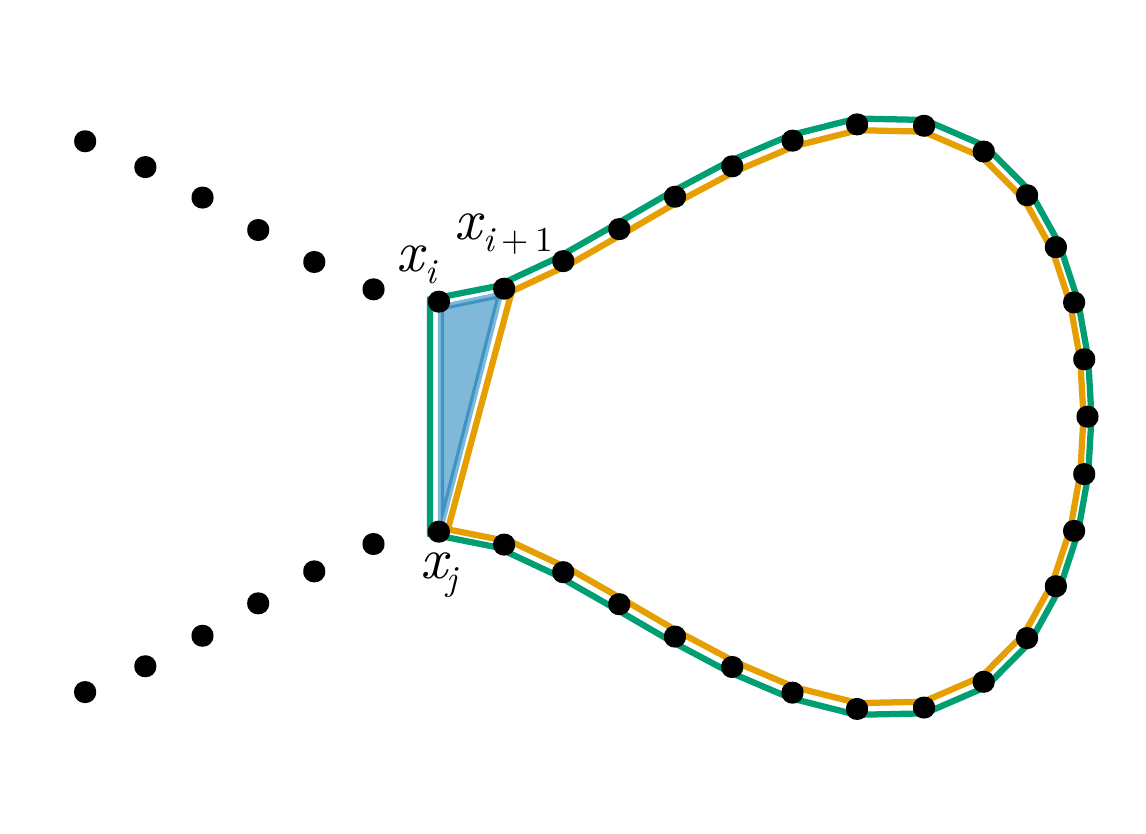}};
		
		\draw[->, thick] (5.5, -3) -- (codomain);
	\end{tikzpicture}
	\caption{Illustration of the chain map. The left panel is a snippet of the distance complex showing
		vertices $v_1 := d_{i,j}$ and $v_2 := d_{i+1,j}$, and an edge $e = [v_1,v_2]$.
		The vertices are mapped to the green and orange cycles, respectively.
		The edge is mapped to the triangle shown in blue.
		The picture shows that the chain map condition $ \partial \psi(e) = \psi(v_2) - \psi(v_1) $ is satisfied.
	}
	\label{fig:chain-map-discrete}
\end{figure}

\begin{lem}\label{lem:phi-dm-vr-well-def}
	$ \psi $ is a degree-one morphism of graded $(R_\Gamma,\infty)$-filtered modules.
\end{lem}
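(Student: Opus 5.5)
To prove \cref{lem:phi-dm-vr-well-def}, we need three facts for each $r>R_\Gamma$. First, $\psi_r$ takes values in $C_{\bullet+1}(\Cech_r(\Gamma))$. Second, $\psi_r$ commutes with the boundary operators, $\partial\psi_r=\psi_r\partial$, on cubical chains. Third, the family $(\psi_r)$ is compatible with the inclusions $\DC_s(\Gamma)\subset\DC_t(\Gamma)$ and $\Cech_s(\Gamma)\subset\Cech_t(\Gamma)$ for $R_\Gamma<s<t$. The third fact is immediate, because the formulas \eqref{eq-phi-dg-to-vr-0}--\eqref{eq-phi-dg-to-vr-2} do not depend on $r$. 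The work is therefore in the first two.

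\textbf{Well-definedness.} For a vertex $[i]\times[j]\in\DC_r(\Gamma)$ we have $\{i,j\}\in\Cech_r(\Gamma)$, so the edge $[i,j]$ is present. The curve chain $c_\Gamma(i,j)$ lies in $C_1(\Cech_r(\Gamma))$ because $r>R_\Gamma$ guarantees that all consecutive edges $\{k-1,k\}$ are present. For an edge or square of $\DC_r(\Gamma)$, the definition \eqref{eq:distance-complex} says precisely that the index set $\{i,i+1,j\}$, $\{i,j,j+1\}$, or $\{i,i+1,j,j+1\}$ is a simplex of $\Cech_r(\Gamma)$. Hence the image, an oriented simplex or zero if degenerate, is a chain there.

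\textbf{Chain map in degree zero.} On vertices, $\partial\psi_r(v)=([j]-[i])-([j]-[i])=0$, which matches $\psi_r(\partial v)=0$. For a horizontal-type edge $e=[i,i+1]\times[j]$ with $i+1\le j$, the paper's formula gives $\psi_r(e)=[i,j,i+1]$. Its boundary is $[j,i+1]-[i,i+1]+[i,j]$. On the other hand,
\[
\psi_r([i+1]\times[j])-\psi_r([i]\times[j]) = c_\Gamma(i+1,j)-[i+1,j]-c_\Gamma(i,j)+[i,j] = -[i,i+1]-[i+1,j]+[i,j],
\]
and the two expressions agree. The other edge type is analogous: with $\psi_r(e)=[i,j,j+1]$, the difference $c_\Gamma(i,j+1)-c_\Gamma(i,j)=[j,j+1]$ yields $[j,j+1]-[i,j+1]+[i,j]$, which equals $\partial[i,j,j+1]$.

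\textbf{Degenerate cases.} These are the main place where care is needed. They occur when $i+1=j$ in the first edge type, or $i=j$ in the second. In that case the triangle is degenerate and $\psi_r(e)=0$, so we must check that $\psi_r(\partial e)=0$. When $i+1=j$, we have $\psi_r([j]\times[j])=c_\Gamma(j,j)-[j,j]=0$, interpreting the degenerate $[j,j]$ as zero. We also have $\psi_r([i]\times[i+1])=[i,i+1]-[i,i+1]=0$. So both sides vanish, and the second type is the same computation.

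\textbf{Chain map in degree one.} For a square $q=[i,i+1]\times[j,j+1]$ with $i+1\le j$, we expand $\partial[i,i+1,j,j+1]$ into its four faces $[i+1,j,j+1]$, $[i,j,j+1]$, $[i,i+1,j+1]$ and $[i,i+1,j]$, with alternating signs. We then compare with $\psi_r$ applied to the four edges of $\partial_2 q$, using the cubical signs from \cref{subsec:cubical-complexes}. Reordering vertices introduces permutation signs; for example, $[i,j,i+1]=-[i,i+1,j]$. I expect these to match termwise, possibly after a global sign fix, in which case I would adjust the orientation convention in \eqref{eq-phi-dg-to-vr-2}.

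The degenerate square $i+1=j$ requires a separate check. Here $\psi_r(q)=0$. Among the boundary edges, $[i]\times[i+1,i+2]$ maps to $[i,i+1,i+2]$, and $[i,i+1]\times[i+2]$ maps to $[i,i+2,i+1]=-[i,i+1,i+2]$. The remaining two edges map to degenerate simplices, namely $[i+1,i+1,i+2]$ and $[i,i+1,i+1]$, which are zero. With the cubical signs, the two nonzero contributions cancel. This sign bookkeeping, especially in the degenerate and orientation-reordering cases, is the main obstacle; the rest is routine.
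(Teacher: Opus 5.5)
Your argument for what this lemma actually asserts is correct and is exactly the paper's proof: each generator of $C_k(\DC_r(\Gamma))$ lands in $C_{k+1}(\Cech_r(\Gamma))$ by \cref{eq:distance-complex}, using $r>R_\Gamma$ for the curve chain, and the formulas do not depend on $r$, which gives compatibility with the inclusions. The chain-map verification you add is not part of this statement (it is the separate \cref{lem:phi-dm-vr-chain-map}), and your hedged square case there needs no sign adjustment: since $[i,j+1,i+1]=-[i,i+1,j+1]$ and $[i,j,i+1]=-[i,i+1,j]$, the four terms of $\psi(\partial q)$ match those of $\partial[i,i+1,j,j+1]$ exactly.
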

\begin{proof}
	For fixed $r>R_\Gamma$, \cref{eq:distance-complex} guarantees that the formulas in \cref{eq-phi-dg-to-vr-0,eq-phi-dg-to-vr-1,eq-phi-dg-to-vr-2}
	map generators of $ C_k(\DC_r(\Gamma)) $ into $ C_{k+1}(\Cech_r(\Gamma)) $.
	Thus, $\psi_r$ is a well-defined degree-one homomorphism of graded modules.
	Furthermore, the right-hand sides of \cref{eq-phi-dg-to-vr-0,eq-phi-dg-to-vr-1,eq-phi-dg-to-vr-2} are independent of $r$, which implies that $\psi_r$ is compatible with the filtration inclusions.
\end{proof}

Since $\psi$ is well-defined, we henceforth omit the subscript $r$ from $ \psi_r $.

\begin{lem}\label{lem:phi-dm-vr-chain-map}
	We have $ \psi\partial = \partial\psi $, so $\psi$ is a chain map.
\end{lem}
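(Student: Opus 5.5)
The plan is to check $\psi\partial=\partial\psi$ on the cubical generators of $C_\bullet(\DC_r(\Gamma))$, dimension by dimension, for fixed $r>R_\Gamma$. By \cref{lem:phi-dm-vr-well-def} and linearity this suffices. The one subtlety is the convention that degenerate simplices are zero. I would handle it uniformly rather than case by case.

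\emph{Uniform treatment of degenerate simplices.} I would first do all computations in the ordered chain complex of $\Cech_r(\Gamma)$. Its generators are tuples of vertices spanning a simplex, repetitions allowed, with the usual alternating-sum boundary. In that complex the formulas \cref{eq-phi-dg-to-vr-0,eq-phi-dg-to-vr-1,eq-phi-dg-to-vr-2} make sense without any exceptions. Afterwards I would pass to the oriented complex via the standard quotient map, which kills degenerate tuples and identifies tuples up to sign of permutation. This quotient is a chain map, so any identity $\partial\psi=\psi\partial$ verified formally in the ordered complex descends to the oriented one. All degenerate cases are then covered automatically: the diagonal edges with $i+1=j$ or $i=j$, the squares with $j=i+1$, and the vertex terms $[i,i]$. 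I expect the only real point of care to be making this reduction precise, in particular that $c_\Gamma(i,i)=0$ is consistent with the formal computation.

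\emph{Degrees $0$ and $1$.} For a vertex $v=(i,j)$ we have $\partial\psi(v)=\partial c_\Gamma(i,j)-\partial[i,j]=([j]-[i])-([j]-[i])=0=\psi(\partial v)$. For a horizontal edge $e=[i,i+1]\times[j]$, $\psi(\partial e)=\psi(i+1,j)-\psi(i,j)$. Since the two curve chains differ by $[i,i+1]$, this equals $-[i,i+1]-[i+1,j]+[i,j]$, and expanding $\partial[i,j,i+1]=[j,i+1]-[i,i+1]+[i,j]$ gives the same chain. The vertical edge $[i]\times[j,j+1]$ works the same way. There the curve chains differ by $[j,j+1]$, and $\partial[i,j,j+1]=[j,j+1]-[i,j+1]+[i,j]$ matches.

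\emph{Degree $2$.} For a square $q=[i,i+1]\times[j,j+1]$, the cubical boundary from \cref{subsec:cubical-complexes} gives
\[
\psi(\partial q)=[i+1,j,j+1]-[i,j,j+1]-[i,j+1,i+1]+[i,j,i+1].
\]
The simplicial boundary gives $\partial[i,i+1,j,j+1]=[i+1,j,j+1]-[i,j,j+1]+[i,i+1,j+1]-[i,i+1,j]$. Reordering the last two terms with a transposition each matches the two expressions term by term. This completes the verification on all generators, so $\psi$ is a chain map of degree one.
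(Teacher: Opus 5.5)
Your generator-by-generator verification is the paper's proof, and your degree $0$, $1$ and $2$ computations coincide with it term for term. What needs correcting is the framing of your degenerate-simplex reduction.

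The identity $\partial\psi=\psi\partial$ does \emph{not} hold formally in the ordered chain complex. For a horizontal edge, $\partial[i,j,i+1]$ contains the tuple $[j,i+1]$, while $\psi(\partial e)$ contains $-[i+1,j]$. In the square case you need two transpositions to match terms. Both identifications are valid only after passing to oriented chains; only the vertex and vertical-edge cases match on the nose.

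What you actually need is a weaker statement, and it is the one your matching steps implicitly use. Write $\theta$ for the quotient map from ordered to oriented chains, so that $\psi=\theta\circ\psi_{\mathrm{ord}}$. The relation $\partial\theta=\theta\partial$ shows that the alternating-sum boundary formula is valid in the oriented complex for every tuple, degenerate or not. It therefore suffices to check $\theta\,\partial\,\psi_{\mathrm{ord}}=\theta\,\psi_{\mathrm{ord}}\,\partial$ on generators, where you may freely use antisymmetry and the vanishing of degenerate tuples.

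With that correction, your argument makes explicit a point the paper uses tacitly: its boundary computations remain valid when $[i,j,i+1]$ or $[i,i+1,j,j+1]$ is degenerate, for example when $j=i+1$. That is a modest gain in rigor over the paper's proof.
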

\begin{proof}
	It is enough to check the identity on elementary cubes.

	Any vertex $ v=(i,j)$ in $ \DC_r(\Gamma) $ trivially has $ \psi(\partial(v)) = 0 $. 
	We calculate
	\[
	\partial(\psi(v)) = \partial(c_\Gamma(i,j) ) - \partial( [i,j] )= ([j] - [i]) - ([j] - [i]) = 0,
	\]
	and thus, by linearity, $ \psi (\partial c) = \partial \psi(c) $ for all 0-chains.
	
	For an edge $e=[i,i+1]\times[j,j]$, with $v=(i,j)$ and $w=(i+1,j)$,
	\[
	\psi_r(\partial e)
	=
	\psi_r(w)-\psi_r(v)
	=
	c_\Gamma(i+1,j)-[i+1,j]
	-
	c_\Gamma(i,j)+[i,j].
	\]
	Since $c_\Gamma(i+1,j) - c_\Gamma(i,j)= - [i,i+1]$ and $ -[i+1,j] =[j,i+1]$, this becomes
	\[
	[j,i+1]-[i,i+1]+[i,j]
	=
	\partial[i,j,i+1]
	=
	\partial\psi_r(e).
	\]
	The case $e=[i,i]\times[j,j+1]$ is analogous.
	
	For a square $q=[i,i+1]\times[j,j+1]$, direct expansion gives
	\begin{align*}
		\psi(\partial q) &= \psi( [i+1]\times [j,j+1] - [i]\times [j,j+1] - [i,i+1]\times[j+1] + [i,i+1]\times[j])\\
		&= [i+1,j,j+1] - [i,j,j+1] - [i,j+1,i+1] + [i,j,i+1]\\
		&= [i+1,j,j+1] - [i,j,j+1] + [i,i+1,j+1] - [i,i+1,j]
	\end{align*}
	which equals
	\begin{align*}
		\partial (\psi(q)) &= \partial [i,i+1,j,j+1] \\
		&= [i+1,j,j+1] - [i,j,j+1] + [i,i+1,j+1]- [i,i+1,j].
	\end{align*}
	Since all other chain groups of $ \DC(\Gamma) $ are trivial, $\psi$ commutes with $\partial$.
\end{proof}

\begin{thm}
	For every $r>R_\Gamma$, the map $\psi_r$ is a well-defined degree-one chain map
	\[
	\psi_r\colon C_\bullet(\DC_r(\Gamma))\to C_{\bullet+1}(\Cech_r(\Gamma)).
	\]
	The maps $\psi_r$ are compatible with the filtration inclusions.
	Hence they define a degree-one morphism of $(R_\Gamma,\infty)$-filtered chain complexes
	\[
	\psi\colon C_\bullet(\DC(\Gamma))\to C_{\bullet+1}(\Cech(\Gamma)).
	\]
\end{thm}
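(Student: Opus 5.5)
This theorem simply packages \cref{lem:phi-dm-vr-well-def} and \cref{lem:phi-dm-vr-chain-map}, so the plan is to assemble those two results and make explicit the points they leave implicit.

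\emph{Well-definedness.} For fixed $r>R_\Gamma$, I check that each generator is sent into $C_{\bullet+1}(\Cech_r(\Gamma))$.

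For a vertex $(i,j)$ with $i\le j$, the condition $\{i,j\}\in\Cech_r(\Gamma)$ from \cref{eq:distance-complex} gives the edge $[i,j]$. The curve chain $c_\Gamma(i,j)$ uses only edges $[k-1,k]$, and these lie in $\Cech_r(\Gamma)$ because $r>R_\Gamma$. Here one needs that the infimum in the definition of $R_\Gamma$ is exceeded strictly. Then for each $k$ there is $r'<r$ with $B(x_{k-1},r')\cap B(x_k,r')\neq\emptyset$, and this intersection sits inside the corresponding $r$-balls.

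For an edge $[i,i+1]\times[j,j]$, the index set $\{i,i+1,j\}$ spans a simplex by definition, so $[i,j,i+1]$ is in the complex or is degenerate, hence zero. The vertical edges are handled the same way. For squares, $[i,i+1,j,j+1]$ is a simplex of $\Cech_r(\Gamma)$, again possibly degenerate. One should note that the convention ``degenerate means zero'' is consistent with the boundary formulas used later.

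Compatibility with the inclusions $r<s$ holds because the defining formulas do not depend on $r$.

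\emph{Chain map.} I invoke \cref{lem:phi-dm-vr-chain-map}: vertices, edges and squares are verified case by case, and the vertical-edge case is the mirror of the horizontal one. The main point to watch is the degenerate cases. Examples are $j=i+1$ for a horizontal edge at the diagonal, or squares $[i,i+1]\times[i+1,i+2]$ and $[i,i+1]\times[i,i+1]$ (the last is excluded by $j\le k$ unless $i=i+1$). In these cases one must confirm that the identity $\psi\partial=\partial\psi$ survives when some terms vanish.

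For a degenerate edge, e.g.\ $e=[i,i+1]\times[i+1,i+1]$, we have $\psi(e)=[i,i+1,i+1]=0$. On the other side, $\psi(i+1,i+1)-\psi(i,i+1)=0-([i,i+1]-[i,i+1])=0$, so both sides vanish. For the square $[i,i+1]\times[i+1,i+2]$, I use the standard identity that $\partial$ of a simplex with a repeated vertex, computed formally, equals the formal sum of its faces, where the two faces containing the repeat cancel. The formal computation in \cref{lem:phi-dm-vr-chain-map} therefore remains valid after setting degenerate simplices to zero. This degenerate bookkeeping is the only step I expect to need care; the rest is direct citation.
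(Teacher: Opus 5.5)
Your proposal is correct and follows the paper's route. The paper's proof simply combines the well-definedness lemma and the chain-map lemma, and your remarks on strictness of $r>R_\Gamma$ and on degenerate simplices make explicit what the paper leaves implicit. One wording fix: in a degenerate simplex such as $[i,i+1,i+1,i+2]$, the two faces that cancel are those obtained by deleting one copy of the repeated vertex, while the faces keeping both copies vanish individually.
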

\begin{proof}
	This follows from the previous two lemmas.
\end{proof}

\begin{exmp}
	Return to the hexagon in \cref{fig:dist-complex-example}, and again take radius $r = 1$.
	We compute the image of some vertices of the distance complex under the distance-to-\v{C}ech map.
	\begin{itemize}
		\item Take $u = (1,2)$. Then $\psi(u) = c_\Gamma(1,2) - [1,2] = 0$.
		\item Take $v = (1,3)$. Then 
		\[
		\psi(v) = c_\Gamma(1,3) - [1,3]  = [1,2] + [2,3] - [1,3].
		\]
		Since $u$ and $v$ are in the same connected component of the distance complex, we have $[\psi(v)] = [\psi(u)] = 0$ in homology.
		\item Take $w = (1,5)$. Then 
		\[
		\psi(w) = c_\Gamma(1,5) - [1,5] = [1,2] + [2,3] + [3,4] + [4,5] - [1,5].
		\] 
		This is a nontrivial cycle in the \v{C}ech complex. 
	\end{itemize}
\end{exmp}

\subsection{Stability of the Distance-to-\v{C}ech Map}

We formulate stability with respect to perturbations that preserve the
pointwise correspondence and temporal order of an ordered point cloud.

First, note the following for ordered \v{C}ech complexes.

\begin{lem}\label{lem:cech-stability}
	Let $\Gamma_1$ and $\Gamma_2$ be ordered point clouds of length $n$ in a
	metric space $(X,d)$. Let
	$\varepsilon=\max_{1\leq i\leq n}d(\Gamma_1(i),\Gamma_2(i))$.
	Then
	\[
		\Cech_r(\Gamma_1)\subseteq\Cech_{r+\varepsilon}(\Gamma_2)
		\qquad\text{and}\qquad
		\Cech_r(\Gamma_2)\subseteq\Cech_{r+\varepsilon}(\Gamma_1).
	\]
\end{lem}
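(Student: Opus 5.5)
The plan is to prove the statement directly from the definition of the \v{C}ech complex via the triangle inequality, exactly as in the first step of the proof of \cref{lem:distance-complex-stability}. By symmetry of the hypotheses (the quantity $\varepsilon$ is symmetric in $\Gamma_1$ and $\Gamma_2$), it suffices to show the first inclusion $\Cech_r(\Gamma_1)\subseteq\Cech_{r+\varepsilon}(\Gamma_2)$; the second follows by exchanging the roles of the two point clouds.

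Both complexes are simplicial complexes on the same vertex set $\{1,\dots,n\}$, since the ordered point clouds have the same index set. So the inclusion is meant at the level of index sets, and we must check that every simplex $\sigma\in\Cech_r(\Gamma_1)$ also lies in $\Cech_{r+\varepsilon}(\Gamma_2)$.

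Fix such a $\sigma$. By definition of the nerve, the intersection $\bigcap_{i\in\sigma}B(\Gamma_1(i),r)$ is nonempty, so we choose a witness point $y$ in it. For each $i\in\sigma$ we then estimate
\[
d(y,\Gamma_2(i))\leq d(y,\Gamma_1(i))+d(\Gamma_1(i),\Gamma_2(i))<r+\varepsilon,
\]
using the strict inequality coming from the open ball and the bound $d(\Gamma_1(i),\Gamma_2(i))\leq\varepsilon$. Hence $y\in\bigcap_{i\in\sigma}B(\Gamma_2(i),r+\varepsilon)$, so $\sigma\in\Cech_{r+\varepsilon}(\Gamma_2)$.

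There is no real obstacle here. The only points requiring care are that we use the same witness $y$ in the ambient space $X$ for all indices simultaneously, and that the open-ball convention keeps the inequality strict. Because the index sets agree, no correspondence between the point clouds needs to be chosen.
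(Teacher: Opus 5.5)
Your proof is correct and is essentially the paper's argument: pick a common witness $y$ in the open balls about the points of $\Gamma_1$, use the triangle inequality to get $d(y,\Gamma_2(i))<r+\varepsilon$ for all $i\in\sigma$, and obtain the second inclusion by interchanging $\Gamma_1$ and $\Gamma_2$.
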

\begin{proof}
	If $\sigma\in\Cech_r(\Gamma_1)$, choose
	$y\in\bigcap_{i\in\sigma}B(\Gamma_1(i),r)$.
	For every $i\in\sigma$, the triangle inequality gives
	$d(y,\Gamma_2(i))<r+\varepsilon$, so
	$\sigma\in\Cech_{r+\varepsilon}(\Gamma_2)$.
	The other inclusion follows by interchanging $\Gamma_1$ and $\Gamma_2$.
\end{proof}

\begin{thm}[Stability]
\label{thm:distance-to-cech-stability}
	Let $\Gamma=(x_1,\dots,x_n)$ and
	$\Gamma'=(x'_1,\dots,x'_n)$ be ordered point clouds in the same metric
	space $(X,d)$, and set
	\[
		\varepsilon=\max_{1\leq i\leq n}d(x_i,x'_i),
		\qquad
		R=\max\{R_\Gamma,R_{\Gamma'}\}.
	\]
	For every $r>R$, the inclusions induced by the identity on the index sets
	fit into the commuting diagrams
	\[
		\begin{tikzcd}[column sep=large,row sep=large]
			C_\bullet(\DC_r(\Gamma))
				\arrow[r,"\psi_{\Gamma,r}"]
				\arrow[d,hook]
			&
			C_{\bullet+1}(\Cech_r(\Gamma))
				\arrow[d,hook]
			\\
			C_\bullet(\DC_{r+\varepsilon}(\Gamma'))
				\arrow[r,"\psi_{\Gamma',r+\varepsilon}"']
			&
			C_{\bullet+1}(\Cech_{r+\varepsilon}(\Gamma'))
		\end{tikzcd}
	\]
	and
	\[
		\begin{tikzcd}[column sep=large,row sep=large]
			C_\bullet(\DC_r(\Gamma'))
				\arrow[r,"\psi_{\Gamma',r}"]
				\arrow[d,hook]
			&
			C_{\bullet+1}(\Cech_r(\Gamma'))
				\arrow[d,hook]
			\\
			C_\bullet(\DC_{r+\varepsilon}(\Gamma))
				\arrow[r,"\psi_{\Gamma,r+\varepsilon}"']
			&
			C_{\bullet+1}(\Cech_{r+\varepsilon}(\Gamma)).
		\end{tikzcd}
	\]
\end{thm}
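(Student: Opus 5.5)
The plan is to check two things: that every arrow in the diagrams is well defined, and that the squares commute on generators.

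\emph{Well-definedness of the vertical maps.} By \cref{lem:distance-complex-stability} we have $\DC_r(\Gamma)\subset\DC_{r+\varepsilon}(\Gamma')$, and by \cref{lem:cech-stability} we have $\Cech_r(\Gamma)\subseteq\Cech_{r+\varepsilon}(\Gamma')$. In both cases the complexes are built on the same index sets, so the identity on indices gives inclusions of cubical and simplicial complexes. These induce the vertical inclusions of chain complexes.

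\emph{Well-definedness of the horizontal maps.} Since $r>R\geq R_\Gamma$, \cref{lem:phi-dm-vr-well-def} and \cref{lem:phi-dm-vr-chain-map} show that $\psi_{\Gamma,r}$ is a well-defined degree-one chain map. Since $r+\varepsilon>R\geq R_{\Gamma'}$, the same holds for $\psi_{\Gamma',r+\varepsilon}$.

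\emph{Commutativity.} Both composites are linear, so it suffices to compare them on elementary cubes of $\DC_r(\Gamma)$. The key observation, already used in \cref{lem:phi-dm-vr-well-def}, is that the defining formulas \cref{eq-phi-dg-to-vr-0,eq-phi-dg-to-vr-1,eq-phi-dg-to-vr-2} depend only on the indices of the cube, and not on the points $x_i$ or on the radius. The three cases are:
\begin{itemize}
	\item for a vertex $(i,j)$, both composites give $c(i,j)-[i,j]$;
	\item for an edge, both give the same oriented triangle $[i,j,i+1]$ or $[i,j,j+1]$;
	\item for a square, both give $[i,i+1,j,j+1]$.
\end{itemize}
The curve chain $c_\Gamma(i,j)=\sum_{k=i+1}^{j}[k-1,k]$ is a formal sum of index edges, so it coincides with $c_{\Gamma'}(i,j)$. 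Its edges lie in $\Cech_r(\Gamma)$ because $r>R_\Gamma$. Degenerate simplices are interpreted as zero on both sides. Hence the first square commutes.

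The second diagram follows by the same argument with the roles of $\Gamma$ and $\Gamma'$ interchanged, using the second inclusions of \cref{lem:distance-complex-stability,lem:cech-stability}. The symmetric definitions of $\varepsilon$ and $R$ make this legitimate.

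The only point that needs care is the well-definedness of the codomain restrictions. The chain $\psi_{\Gamma,r}(\sigma)$ lies in $C(\Cech_r(\Gamma))$, and $\psi_{\Gamma',r+\varepsilon}(\sigma)$ must be read inside $C(\Cech_{r+\varepsilon}(\Gamma'))$. This holds because the generator is the same formal chain in both, and $\Cech_r(\Gamma)\subseteq\Cech_{r+\varepsilon}(\Gamma')$. Beyond this, I expect no real obstacle.
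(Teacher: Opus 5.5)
Your proof is correct and follows the same route as the paper. The vertical inclusions come from \cref{lem:distance-complex-stability} and \cref{lem:cech-stability}, and commutativity holds because the defining formulas for $\psi$ depend only on the indices of each cube; you also make explicit two points the paper leaves implicit, namely that $\psi_{\Gamma',r+\varepsilon}$ is defined because $r+\varepsilon>R\geq R_{\Gamma'}$, and that $c_\Gamma(i,j)$ and $c_{\Gamma'}(i,j)$ are the same formal chain.
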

\begin{proof}
	By \cref{lem:distance-complex-stability} and
	\cref{lem:cech-stability}, all vertical maps are well-defined inclusions.
	On each elementary cube, the two composites in the first diagram are given by the same formula from
	\cref{eq-phi-dg-to-vr-0,eq-phi-dg-to-vr-1,eq-phi-dg-to-vr-2}.
	Thus it is easy to see that the diagrams commute.
\end{proof}

\begin{defn}
	Let $M=(M_r)_{r>R}$ and $N=(N_r)_{r>R}$ be filtered modules or filtered chain complexes, with structure maps denoted by $\iota^M_{r,s}$ and $\iota^N_{r,s}$.
	An \emph{$\varepsilon$-interleaving} between $M$ and $N$ consists of filtration-compatible maps
	\[
		f_r\colon M_r\to N_{r+\varepsilon},
		\qquad
		g_r\colon N_r\to M_{r+\varepsilon}
	\]
	such that
	\[
		g_{r+\varepsilon}f_r=\iota^M_{r,r+2\varepsilon},
		\qquad
		f_{r+\varepsilon}g_r=\iota^N_{r,r+2\varepsilon}.
	\]

	Let $\varphi\colon M\to P$ and $\varphi'\colon N\to Q$ be morphisms of
	filtered modules or filtered chain complexes.
	We call them \emph{$\varepsilon$-interleaved} if there are
	$\varepsilon$-interleavings $(f,g)$ between $M$ and $N$ and $(f',g')$
	between $P$ and $Q$ such that, for every $r>R$, the following diagrams commute:
	\[
		\begin{tikzcd}[column sep=large,row sep=large]
			M_r
				\arrow[r,"\varphi_r"]
				\arrow[d,"f_r"']
			&
			P_r
				\arrow[d,"f'_r"]
			\\
			N_{r+\varepsilon}
				\arrow[r,"\varphi'_{r+\varepsilon}"']
			&
			Q_{r+\varepsilon}
		\end{tikzcd}
		\qquad
		\begin{tikzcd}[column sep=large,row sep=large]
			N_r
				\arrow[r,"\varphi'_r"]
				\arrow[d,"g_r"']
			&
			Q_r
				\arrow[d,"g'_r"]
			\\
			M_{r+\varepsilon}
				\arrow[r,"\varphi_{r+\varepsilon}"']
			&
			P_{r+\varepsilon}.
		\end{tikzcd}
	\]
\end{defn}

\begin{cor}\label{cor:distance-to-cech-stability}
	Under the hypotheses of \cref{thm:distance-to-cech-stability}, the persistence morphisms
	\[
		H_k(\psi_\Gamma)\colon
		H_k(\DC(\Gamma))\longrightarrow H_{k+1}(\Cech(\Gamma))
	\]
	and $H_k(\psi_{\Gamma'})$ are $\varepsilon$-interleaved for every $k$.
	Consequently, their kernel, image, and cokernel persistence modules are $\varepsilon$-interleaved.
\end{cor}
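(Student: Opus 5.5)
The plan is to build the interleavings at chain level from the inclusions in \cref{lem:distance-complex-stability} and \cref{lem:cech-stability}, pass to homology by functoriality, and then invoke a general fact about interleaved morphisms.

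\textbf{Step 1: the interleavings of domains and codomains.} Set $M=H_k(\DC(\Gamma))$, $N=H_k(\DC(\Gamma'))$, $P=H_{k+1}(\Cech(\Gamma))$ and $Q=H_{k+1}(\Cech(\Gamma'))$, all indexed by $r>R$. Define $f_r\colon M_r\to N_{r+\varepsilon}$ as the map induced by the inclusion $\DC_r(\Gamma)\subset\DC_{r+\varepsilon}(\Gamma')$ of \cref{lem:distance-complex-stability}. Define $g_r$ symmetrically, and define $f'_r,g'_r$ analogously using \cref{lem:cech-stability}. All of these maps are induced by the identity on the index set, or on the set of elementary cubes. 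Hence the composite $g_{r+\varepsilon}f_r$ is induced by the inclusion $\DC_r(\Gamma)\subset\DC_{r+2\varepsilon}(\Gamma)$, which is exactly the structure map. The same holds for the other three composites, and compatibility with the filtration maps is immediate for the same reason. So $(f,g)$ and $(f',g')$ are $\varepsilon$-interleavings.

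\textbf{Step 2: the squares commute.} The commuting squares required in the definition of $\varepsilon$-interleaved morphisms are precisely the two diagrams of \cref{thm:distance-to-cech-stability}, after applying $H_k$ and $H_{k+1}$ respectively. Since homology is a functor, commutativity at the chain level passes to homology. This proves the first claim.

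\textbf{Step 3: kernels, images and cokernels.} This step needs the general lemma that $\varepsilon$-interleaved morphisms $\varphi\colon M\to P$ and $\varphi'\colon N\to Q$ have $\varepsilon$-interleaved kernels, images and cokernels. I would prove it directly.
\begin{itemize}
\item \emph{Kernels.} If $\varphi_r(m)=0$, then $\varphi'_{r+\varepsilon}(f_r m)=f'_r\varphi_r(m)=0$. So $f$ restricts to a map $\ker\varphi\to\ker\varphi'$, and similarly for $g$. The interleaving identities are inherited from $M$ and $N$, because the structure maps of the kernels are restrictions of those of $M$ and $N$.
\item \emph{Images.} For $p=\varphi_r(m)$ we have $f'_r(p)=\varphi'_{r+\varepsilon}(f_r m)$, so $f'$ restricts to a map $\im\varphi\to\im\varphi'$. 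The identities are inherited from $P$ and $Q$.
\item \emph{Cokernels.} By the same square, $f'_r$ maps $\im\varphi_r$ into $\im\varphi'_{r+\varepsilon}$, so $f'$ descends to the quotients $\operatorname{coker}\varphi\to\operatorname{coker}\varphi'$. The identities hold because they already hold on $P$ and $Q$.
\end{itemize}
In each case I also have to check naturality in $r$, which again reduces to the naturality of $f,g,f',g'$.

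The main obstacle is modest: keeping track of indices $r>R$ so that every map is defined, since $r+\varepsilon>R$ holds automatically. The only substantive point is verifying the cokernel descent, which uses exactly the commuting squares from Step 2. No field assumption is needed here; that would only enter afterwards, for the barcode and bottleneck statement.
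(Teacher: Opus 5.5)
Your proposal is correct and follows the paper's proof. The paper likewise gets the interleaving of $H_k(\psi_\Gamma)$ and $H_k(\psi_{\Gamma'})$ by applying homology to the commuting diagrams of \cref{thm:distance-to-cech-stability}, using the inclusions of \cref{lem:distance-complex-stability} and \cref{lem:cech-stability}, and then simply notes that interleaved morphisms have interleaved kernels, images and cokernels; your Step 3 writes out the verification the paper leaves to the reader.
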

\begin{proof}
	The first part follows directly from the theorem by applying homology to the diagrams.
	It remains only to note that an interleaving of morphisms as defined above induces interleavings of their kernels, images, and cokernels.
\end{proof}

The last statement gives stability for the barcodes of the kernels, images, and cokernels of the distance-to-\v{C}ech maps. 
More precisely, let $d_B$ denote the bottleneck distance between persistence diagrams, and let the supremum distance between ordered point clouds be
$
	d_\infty(\Gamma,\Gamma')
	=
	\sup_{1\leq i\leq n}d(x_i,x'_i).
$
With field coefficients, the distance-complex and \v{C}ech filtrations of ordered point clouds have pointwise finite-dimensional homology groups.
By \cref{cor:distance-to-cech-stability}, for $\mathcal F\in\{\ker,\operatorname{im},\operatorname{coker}\}$, the corresponding persistence diagrams satisfy
\[
	d_B\!\left(
		\operatorname{Dgm}\mathcal F(H_k(\psi_\Gamma)),
		\operatorname{Dgm}\mathcal F(H_k(\psi_{\Gamma'}))
	\right)
	\leq d_\infty(\Gamma,\Gamma').
\]

\subsection{The Distance-to-Rips Map}

Given an ordered point cloud $\Gamma=(x_1,\dots,x_n)$, we have
$ \Cech_r(\Gamma) \subset \VR_{2r}(\Gamma) $ for every $r\geq 0$.
We thus get a map from the distance filtration to the Vietoris--Rips filtration by composing the distance-to-\v{C}ech map with this inclusion.

A different (and better-behaved; see \cref{rem:distance-to-rips-vs-inclusion}) map into the Vietoris--Rips filtration can be obtained via the Kuratowski embedding \cite{Kuratowski1935}.
We now introduce some background, following Section 2.2 of \cite{LimMemoliOkutan2024}, where the Kuratowski embedding was studied in detail.

\begin{defn}
	A metric space $ (Y,d_Y) $ is called hyperconvex if 
	\begin{enumerate}[label=(\roman*)]
		\item it is \emph{metrically convex}: for all $x,y\in Y$, $r_1,r_2\geq 0$
		\[
			d(x,y) \leq r_1 + r_2 \iff \overline{B(x,r_1)} \cap \overline{B(y,r_2)} \neq \emptyset,
		\]
		\item it has the \emph{binary Helly property}: any pairwise intersecting family of closed balls has a common point.
	\end{enumerate}	
\end{defn}
The binary Helly property also holds for finite collections of open balls:

\begin{lem}\label{lem:hyperconvexity-open-ball-intersection}
	Let $(Y,d_Y)$ be hyperconvex, and let $x_i \in Y$ and $r_i\geq 0$ for $i\in I = \{ 1,\dots, n\}$.
	If $ B(x_i,r_i) \cap B(x_j,r_j)\neq \emptyset$ for all $ i,j\in I $, then
	\[
		\bigcap_{i\in I} B(x_i,r_i) \neq \emptyset.
	\]
\end{lem}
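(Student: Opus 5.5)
The plan is to reduce the open-ball statement to the binary Helly property for closed balls by shrinking the radii slightly. Open balls are unions of closed balls of smaller radius, and $I$ is finite, so we can shrink all radii uniformly and still keep every pairwise intersection nonempty.

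First, for each pair $i,j\in I$, I pick a point $y_{ij}\in B(x_i,r_i)\cap B(x_j,r_j)$. Then $d(y_{ij},x_i)<r_i$ and $d(y_{ij},x_j)<r_j$. Because there are finitely many pairs and finitely many strict inequalities, there is a $\delta>0$ with
\[
	d(y_{ij},x_i)\leq r_i-\delta \quad\text{and}\quad d(y_{ij},x_j)\leq r_j-\delta \qquad\text{for all } i,j\in I.
\]
This forces $r_i\geq\delta>0$ for every $i$; the case $i=j$ uses $B(x_i,r_i)\neq\emptyset$, so no ball has radius $0$. Set $s_i=r_i-\delta\geq 0$. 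Then $y_{ij}\in\overline{B(x_i,s_i)}\cap\overline{B(x_j,s_j)}$ for all $i,j$, so the closed balls $\overline{B(x_i,s_i)}$ intersect pairwise.

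Second, I apply the binary Helly property of $(Y,d_Y)$ to the finite family $\bigl(\overline{B(x_i,s_i)}\bigr)_{i\in I}$. This gives a point $z$ with $d(z,x_i)\leq s_i<r_i$ for every $i$. Hence $z\in\bigcap_{i\in I}B(x_i,r_i)$, which is the claim. Metric convexity is not needed for this argument; here $\overline{B(x,s)}$ means the closed ball $\{y\mid d(x,y)\leq s\}$.

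The only delicate step is choosing one uniform $\delta$ that works for all pairs simultaneously, and this is where finiteness of $I$ is used. For infinite families the infimum of the slack could be zero and the argument fails, which is why the lemma is stated only for finite $I$.
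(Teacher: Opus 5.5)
Your proof is correct and follows the paper's route: shrink every radius by one uniform amount, which is possible because $I$ is finite, then apply the binary Helly property to the resulting closed balls and observe that the common point lies in all the open balls. You also fill in two details the paper leaves implicit. The witnesses $y_{ij}$ justify why a uniform shrinkage exists, and the case $i=j$ guarantees that the shrunken radii stay nonnegative.
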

\begin{proof}
	Since there are only finitely many balls, there is $ s > 0 $ such that
	\[
	\overline{B(x_i,r_i-s)} \cap \overline{B(x_j,r_j-s)} \neq \emptyset \quad\text{for all }i,j\in I.
	\] 
	By hyperconvexity of $Y$, the family of closed balls $ \{\overline{B(x_i,r_i-s)} \}_{i\in I} $ has a common point. This point is also in the intersection of the open balls $B(x_i,r_i)$.
\end{proof}

Hyperconvex metric spaces are also called injective, in reference to the category-theoretic notion; see, e.g., the introduction of \cite{Lang2013InjectiveHulls}.

\begin{exmp}
	For any set $S$, the Banach space $L^\infty(S)$ of all bounded real-valued functions on $S$ with the $\ell^\infty$ norm is hyperconvex. For a proof, see \cite{Lang2013InjectiveHulls}.
\end{exmp}

\begin{defn}
	The \emph{Kuratowski embedding} of a compact metric space $(Y,d_Y)$ is the map
	\[
		\kappa\colon Y \rightarrow L^\infty(Y), \quad y\mapsto d_Y(y,\cdot).
	\]
\end{defn}

It is not hard to show 
\begin{equation}\label{eq:d-Y-norm-kappa}
	d_Y(y,y') = \norm{ \kappa(y) - \kappa(y')}_\infty,
\end{equation}
which implies that $\kappa$ is an isometric embedding. 
The key point for our purposes is that, for subsets of a hyperconvex metric space, the Vietoris--Rips complex is the \v{C}ech complex at half the radius.

\begin{thm}[cf.\ Proposition 2.2 in \cite{LimMemoliOkutan2024}]\label{thm:cech-rips-hyperconvex}
	Let $A $ be a nonempty subset of a hyperconvex metric space $(Y,d_Y)$. Then for any $r\geq 0$, 
	\[
		\Cech_r(A) = \VR_{2r}(A).
	\]
\end{thm}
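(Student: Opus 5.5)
The plan is to prove the two inclusions $\Cech_r(A)\subseteq\VR_{2r}(A)$ and $\VR_{2r}(A)\subseteq\Cech_r(A)$ separately. Both complexes have the same vertex set $A$ and are closed under taking faces, so it suffices to compare the conditions on a finite nonempty $\sigma\subset A$.

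For $\Cech_r(A)\subseteq\VR_{2r}(A)$, which holds in any metric space, take $\sigma\in\Cech_r(A)$ and pick $y\in\bigcap_{a\in\sigma}B(a,r)$. For $a,b\in\sigma$ the triangle inequality gives $d_Y(a,b)\le d_Y(a,y)+d_Y(y,b)<2r$. Since $\sigma$ is finite, the maximum of these distances is attained, so $\max_{a,b\in\sigma}d_Y(a,b)<2r$ and $\sigma\in\VR_{2r}(A)$. When $r=0$ both complexes are empty, because open balls of radius $0$ are empty and the strict inequality $<0$ never holds, so we may assume $r>0$ from here on.

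For $\VR_{2r}(A)\subseteq\Cech_r(A)$, take $\sigma\in\VR_{2r}(A)$, so $d_Y(a,b)<2r$ for all $a,b\in\sigma$. The goal is to apply \cref{lem:hyperconvexity-open-ball-intersection} to the finite family of balls $B(a,r)$, $a\in\sigma$, with all radii equal to $r$. We first check that these balls intersect pairwise. Given $a,b\in\sigma$, choose $t$ with $d_Y(a,b)/2\le t<r$, which is possible because $d_Y(a,b)<2r$ and $r>0$. Then $d_Y(a,b)\le t+t$, so metric convexity yields a point $z\in\overline{B(a,t)}\cap\overline{B(b,t)}$. Here $\overline{B(x,t)}$ means the closed ball $\{y\mid d_Y(x,y)\le t\}$, as in the definition of hyperconvexity. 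It follows that $d_Y(a,z)\le t<r$ and $d_Y(b,z)\le t<r$, so $z\in B(a,r)\cap B(b,r)$. \Cref{lem:hyperconvexity-open-ball-intersection} then gives $\bigcap_{a\in\sigma}B(a,r)\neq\emptyset$, hence $\sigma\in\Cech_r(A)$.

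The main obstacle is the gap between the strict inequalities of the open-ball and Rips conventions and the non-strict, closed-ball formulation of hyperconvexity. The argument handles it by choosing the slack radius $t<r$ in the pairwise step. Elsewhere it relies on finiteness of simplices, which lets us use \cref{lem:hyperconvexity-open-ball-intersection}, already proved for finitely many open balls.
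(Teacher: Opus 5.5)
Your proof is correct and follows the paper's argument: the easy inclusion comes from the triangle inequality. The reverse inclusion checks that the balls $B(a,r)$, $a\in\sigma$, intersect pairwise and then applies \cref{lem:hyperconvexity-open-ball-intersection} to this finite family. You also spell out two points the paper passes over with ``thus'': that metric convexity, via a radius $t<r$, is what gives the pairwise intersections, and the degenerate case $r=0$.
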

\begin{proof}
	The inclusion $ \subset $ is well-known.
	To show equality, fix a simplex $\sigma$ in $\VR_{2r}(A)$. 
	By definition, all its edges have length less than $2r$, and thus
	\[
		B(x, r) \cap B(y,r) \neq \emptyset\quad \text{for all }x,y\in \sigma.
	\]
	A simplex has only finitely many vertices, so \cref{lem:hyperconvexity-open-ball-intersection} implies $\bigcap_{x\in\sigma } B(x,r) \neq \emptyset$. Therefore $\sigma$ is in $\Cech_r(A)$.
\end{proof}

We now define the Vietoris--Rips version of the distance complex.

\begin{defn}\label{def:rips-distance-complex}
	The \emph{(Vietoris--)Rips distance complex} of $\Gamma$ is
	\[
		\RDC(\Gamma)=\bigl(\RDC_r(\Gamma)\bigr)_{r\geq 0},
	\]
	where $\RDC_r(\Gamma)$ is the full cubical subcomplex of $\Cub(\mathbb R^2)$
	induced by the vertex set
	\[
		V_r=\{(i,j)\mid 1\leq i\leq j\leq n,\ d(x_i,x_j) < r\}.
	\]
\end{defn}

Given a finite ordered point cloud $\Gamma=(x_1,\dots,x_n)$, we now relate the Rips distance complex to the distance complex of its Kuratowski embedding $\kappa(\Gamma)=(\kappa(x_1),\dots,\kappa(x_n))$.
For this, we need the metric step size of $\Gamma$,
\[
	S_\Gamma = \max_{2\leq i\leq n}d(x_{i-1},x_i).
\]
Clearly, $S_\Gamma/2\leq R_\Gamma\leq S_\Gamma$; in Euclidean spaces or, more generally, metrically convex spaces, we have $R_\Gamma=S_\Gamma/2$.

\begin{prop}\label{prop:rdc-is-dc}
	Provided $r>S_\Gamma$, we have
	\[
		\RDC_r(\Gamma) = \DC_{r/2}(\kappa(\Gamma)).
	\]
\end{prop}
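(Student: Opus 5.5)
We compare the two cubical complexes cube by cube. Both are subcomplexes of $\Cub([1,n]^2)$ supported on the upper-triangular region, so it suffices to show that an elementary cube $Q=[i,j]\times[k,l]$ with $j\le k$ lies in $\RDC_r(\Gamma)$ if and only if it lies in $\DC_{r/2}(\kappa(\Gamma))$.

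The first step is to translate the right-hand side into a purely metric condition on $\Gamma$. By definition, $Q\in\DC_{r/2}(\kappa(\Gamma))$ means that $\{i,j,k,l\}\in\Cech_{r/2}(\kappa(\Gamma))$. Since $L^\infty(\Gamma)$ is hyperconvex, \cref{thm:cech-rips-hyperconvex} applies to the family $\kappa(\Gamma)$. Strictly speaking, that theorem is stated for subsets, but the same proof via \cref{lem:hyperconvexity-open-ball-intersection} works verbatim for indexed families. It gives $\Cech_{r/2}(\kappa(\Gamma))=\VR_r(\kappa(\Gamma))$. Since $\kappa$ is an isometry by \cref{eq:d-Y-norm-kappa}, this equals $\VR_r(\Gamma)$. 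Hence
\[
	Q\in\DC_{r/2}(\kappa(\Gamma)) \iff d(x_a,x_b)<r \text{ for all } a,b\in\{i,j,k,l\}.
\]

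The second step unpacks the left-hand side. Because $\RDC_r(\Gamma)$ is the full subcomplex on $V_r$, we have $Q\in\RDC_r(\Gamma)$ if and only if every vertex of $Q$ lies in $V_r$. The vertices are $(a,b)$ with $a\in\{i,j\}$ and $b\in\{k,l\}$, and they satisfy $a\le b$ automatically because $j\le k$. So this condition reads $d(x_a,x_b)<r$ for all $a\in\{i,j\}$ and $b\in\{k,l\}$.

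Comparing the two conditions, the "cross" distances appear in both. The only extra requirements on the $\DC$ side are $d(x_i,x_j)<r$ and $d(x_k,x_l)<r$. Here $j\in\{i,i+1\}$ and $l\in\{k,k+1\}$, so each of these distances is either $0$ or a consecutive distance, which is at most $S_\Gamma<r$. This is exactly where the hypothesis $r>S_\Gamma$ enters. Therefore the two conditions coincide for every cube, which proves the equality.

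The only step requiring care is the first one: justifying the Čech/Rips identification for the indexed family $\kappa(\Gamma)$, which may have repeated points, and keeping the index bookkeeping consistent. The degenerate cases, where $i=j$ or $k=l$ or $j=k$, cause no trouble because a repeated index contributes distance $0$.
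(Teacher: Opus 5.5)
Your proof is correct and follows essentially the same argument as the paper. Both use the \v{C}ech--Rips identification in hyperconvex spaces together with the isometry $\kappa$ to turn membership of $[i,j]\times[k,l]$ in $\DC_{r/2}(\kappa(\Gamma))$ into $\{i,j,k,l\}\in\VR_r(\Gamma)$, and then use $r>S_\Gamma$ to supply the non-cross edges $\{i,j\}$ and $\{k,l\}$. Your cube-by-cube ``if and only if'' is a compact repackaging of the paper's ``same vertex set, then fullness'' structure, and you rightly note that the identification has to be applied to the indexed family $\kappa(\Gamma)$, a point the paper leaves implicit.
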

\begin{proof}
	Using \cref{thm:cech-rips-hyperconvex},
	we have
	\[
		\DC_{r/2}(\kappa(\Gamma)) = \left\{ [i,j]\times [k,l] \in \Cub([1,n]^2)\mid j\leq k\text{ and }\{ i,j,k,l\} \in \VR_r(\Gamma) \right\}.
	\]
	Therefore, the vertex set $V_r$ from \cref{def:rips-distance-complex} is the vertex set of $ \DC_{r/2}(\kappa(\Gamma)) $. 

	It is left to show that $ \DC_{r/2}(\kappa(\Gamma)) $ is the full subcomplex of $ \Cub([1,n]^2) $ with vertex set $V_r$.
	Suppose all vertices of an elementary cube $Q = [i,j] \times [k,l]$ are contained in $\DC_{r/2}(\kappa(\Gamma))$.
	Then, the edges $ \{i,k\}, \{ i,l \}, \{j,k\}, \{ j,l \} $ are in $\VR_r(\Gamma) $ by the formula above for the distance complex.
	Furthermore, the condition $r>S_\Gamma$ ensures $ d(x_i,x_j) < r$ and $d(x_k,x_l)<r$, which yields $ \{i,j\}, \{ k,l \} \in \VR_r(\Gamma)$.
	Thus every two-element subset of $\{i,j,k,l\}$ belongs to $\VR_r(\Gamma)$.
	By the clique property, $\{i,j,k,l\}\in\VR_r(\Gamma)$, and hence
	$Q\in\DC_{r/2}(\kappa(\Gamma))$.
	Therefore, $ \DC_{r/2}(\kappa(\Gamma)) $ is the full subcomplex of $ \Cub([1,n]^2) $ with vertex set $V_r$.
\end{proof}

For $ r\leq S_\Gamma $, the Rips distance complex need not agree with the distance complex of the Kuratowski embedding. 
We chose this definition of the Rips distance complex since it is easier to compute and more directly related to the distance matrix of the time series.

\begin{defn}
	The \emph{distance-to-Rips map} of $ \Gamma $ is the distance-to-\v{C}ech map of $ \kappa(\Gamma) $,
	which we denote by
	\[
		\tilde\psi\colon \RDC(\Gamma) \rightarrow \VR(\Gamma).
	\]
\end{defn}

Since $L^\infty(\Gamma)$ is metrically convex,
$R_{\kappa(\Gamma)}=S_\Gamma/2$. Thus, under the identifications
$\RDC_r(\Gamma)=\DC_{r/2}(\kappa(\Gamma))$ and
$\VR_r(\Gamma)=\Cech_{r/2}(\kappa(\Gamma))$, the distance-to-\v{C}ech map at
parameter $r/2$ defines the distance-to-Rips map at parameter $r$ for every
$r>S_\Gamma$.

\begin{exmp}\label{ex:close-return-dm-and-rips}
	Let $\Gamma=(x_1,\dots,x_{41})$ be the time series shown in \cref{fig:chain-map-example-ts}.
	It has a close return between $x_7$ and $x_{35}$, producing an off-diagonal low-value region in the distance matrix shown in \cref{fig:chain-map-example-dm}.
	In \cref{fig:chain-map-example-levels}, we show selected sublevel sets of the distance matrix.
	\begin{itemize}
		\item At the first filtration value, only the diagonal component is present in the distance complex.
		\item At the second filtration value, the off-diagonal component appears in the Rips distance complex. At the same time, the Vietoris--Rips complex has a nontrivial 1-cycle, which is the image of the off-diagonal component under the distance-to-Rips map.
		\item At the third filtration value, the off-diagonal component is still present, but the 1-cycle is filled in the Vietoris--Rips complex.
		\item At the fourth filtration value, the off-diagonal component has merged with the diagonal component. A nontrivial 1-cycle is present in the Rips distance complex; however, its image under the distance-to-Rips map is trivial in homology. This is not obvious from the picture, but a computation shows that the degree-two homology of the Vietoris--Rips complex is trivial.
	\end{itemize}
\end{exmp}

\begin{rem}\label{rem:comparison-with-ichinomiya}
	We now describe the relationship between our distance complex and the filtration used by Ichinomiya \cite{Ichinomiya2025}.

	Any matrix can be regarded as an image by assigning a color (for example, a grayscale value) to each matrix entry.
	There are two common filtrations associated with images (see, e.g., \cite{Bleile2022,Robins2011,stucki23a}):
	the $ \mathcal{V} $-construction and the $\mathcal{T}$-construction; their relationship is discussed in \cite{Bleile2022}.

	The $ \mathcal{V} $-construction of $M\in \R^{d\times d}$ is a filtered cubical complex which, at filtration value $r$, is the full subcomplex of $ \Cub([1,d]^2) $ that has all vertices $(i,j)$ where $ m_{i,j}< r $.
	The $\mathcal{T}$-construction proceeds the other way around: At filtration value $r$, one takes the subcomplex of $ \Cub([0,d]^2) $ that is generated by all squares $ [i-1,i] \times [j-1,j]$ where $m_{i,j}<r$. 
	
	Given a time-ordered distance matrix of a time series, the Rips distance complex is the upper right part of the $ \mathcal{V} $-construction associated with this matrix, whereas Ichinomiya \cite{Ichinomiya2025} considers the $\mathcal{T}$-construction of the same matrix.
\end{rem}

\begin{figure}[ht]
	\centering
	\begin{subfigure}[t]{0.58\textwidth}
		\centering
		\includegraphics[width=.8\textwidth]{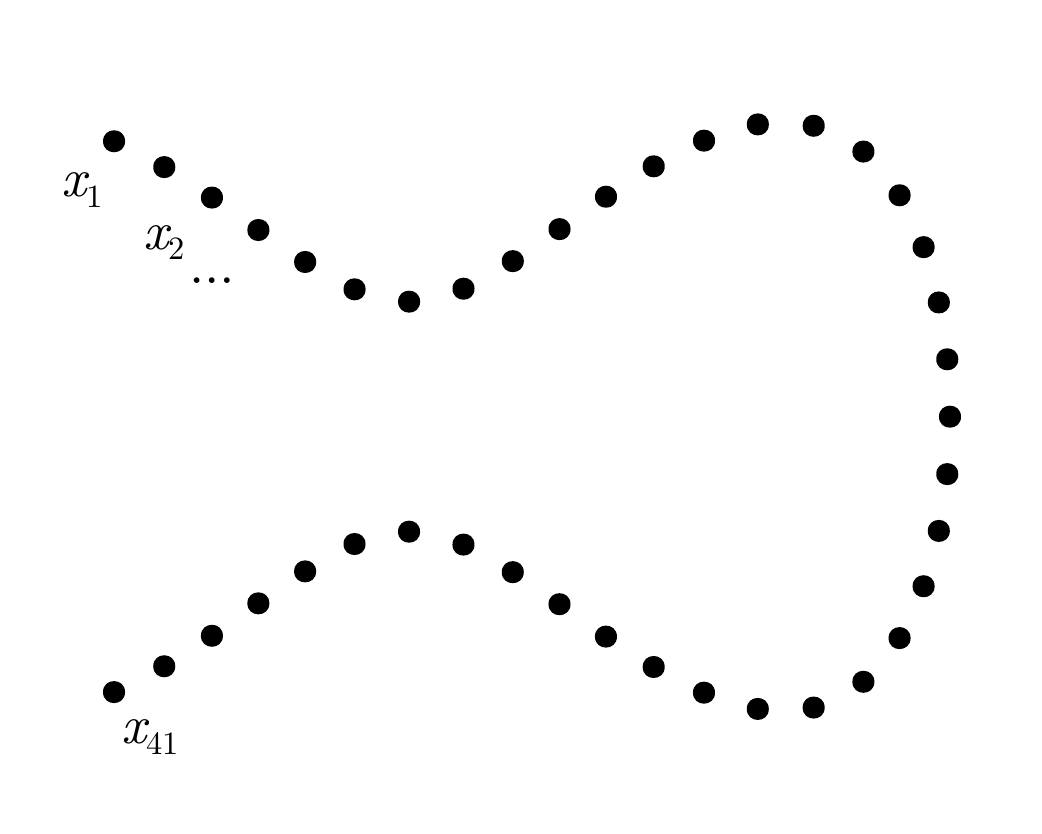}
		\caption{}
		\label{fig:chain-map-example-ts}
	\end{subfigure}
	\hfill
	\begin{subfigure}[t]{0.36\textwidth}
		\centering
		\includegraphics[width=.8\textwidth]{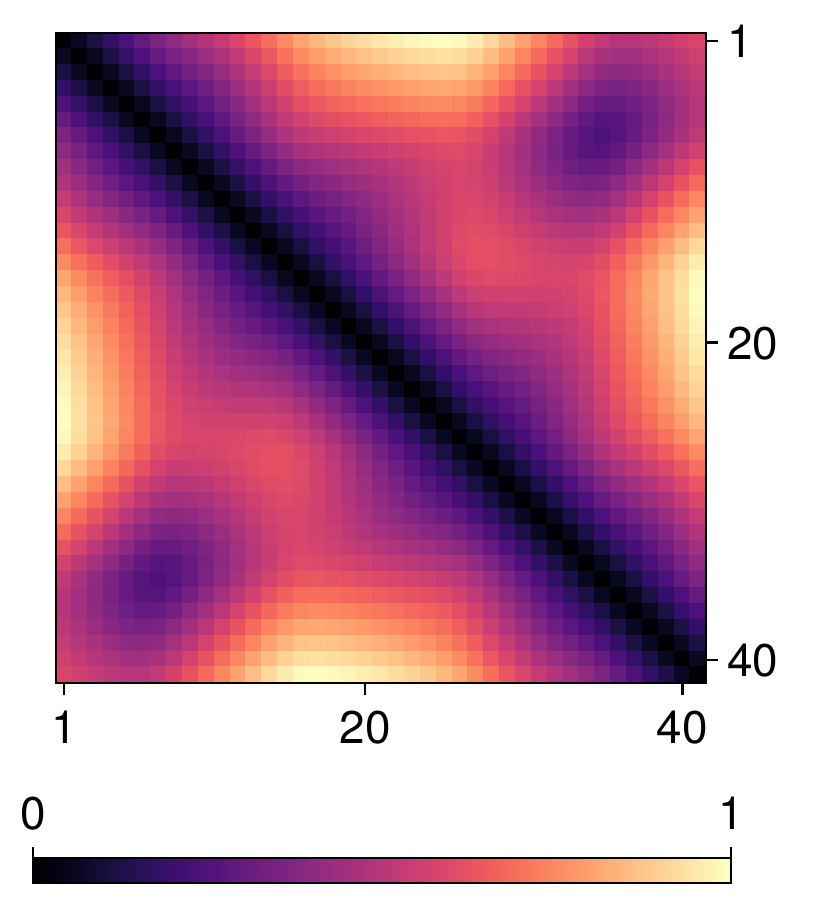}
		\caption{}
		\label{fig:chain-map-example-dm}
	\end{subfigure}
	
	\begin{subfigure}[t]{0.98\textwidth}
		\centering
		\includegraphics[width=.95\textwidth]{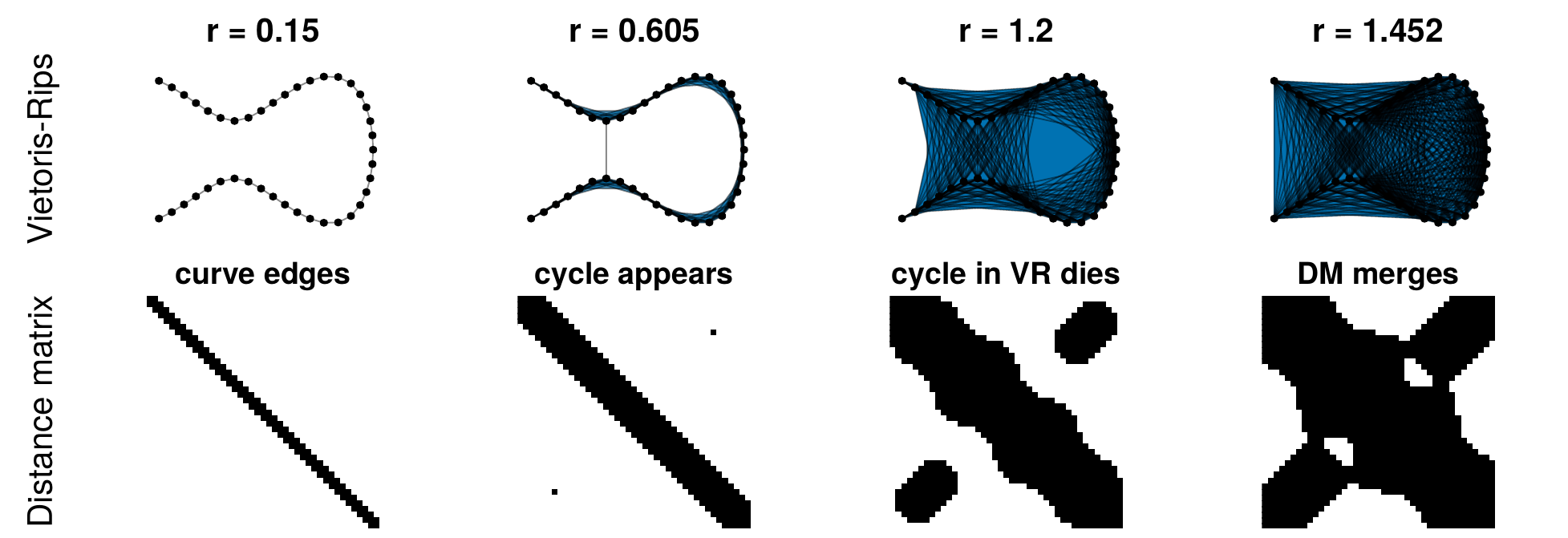}
		\caption{}
		\label{fig:chain-map-example-levels}
	\end{subfigure}
	\caption{A sampled curve with a close return.
		\textbf{(a)}~The time series $\Gamma=(x_1,\dots,x_{41})$ in state space.
		\textbf{(b)}~Heatmap of its distance matrix, with rows and columns ordered by time.
		Low values away from the diagonal mark pairs of distinct time indices whose states are close.
		\textbf{(c)}~Four filtration values shown both in the Vietoris--Rips filtration and in the binary sublevel sets of the distance matrix.
		At the second value, an off-diagonal component records the close return that gives a cycle under the distance-to-Rips map.
	}
	\label{fig:chain-map-example}
\end{figure}

\subsection{Properties of the Induced Map in Degree Zero}

We now consider the induced map $ H(\psi) $.
In this section, we consider homology with respect to an arbitrary coefficient ring. 
We show that the induced map from $H_0$ to $H_1$ is surjective and then discuss injectivity.

\begin{thm}\label{thm:distance-to-cech-surjective}
	For every $r>R_\Gamma$, the map $H_0(\psi_r)$ is surjective.
\end{thm}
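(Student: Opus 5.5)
Since $H_0(\DC_r(\Gamma))$ is generated by the classes of vertices, it suffices to show that $H_1(\Cech_r(\Gamma))$ is generated by the classes $[\psi(v)] = [c_\Gamma(i,j) - [i,j]]$, where $(i,j)$ runs over the vertices of $\DC_r(\Gamma)$, that is, over the edges $\{i,j\}$ with $i\le j$ of $\Cech_r(\Gamma)$. Every $1$-cycle of $\Cech_r(\Gamma)$ is homologous to a cycle supported on the $1$-skeleton, so we may work with a cycle $z = \sum_{e} a_e\, e$ that is a linear combination of oriented edges $[i,j]$. The plan is to show that $z$ is, at the chain level, a linear combination of the cycles $\psi(i,j)$ for edges $\{i,j\}$ appearing in $z$.

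For this I would set $w = \sum_{i<j} a_{[i,j]}\,\psi((i,j)) = \sum_{i<j} a_{[i,j]}\bigl(c_\Gamma(i,j) - [i,j]\bigr)$. Here each oriented edge is written with its smaller index first, and an edge $[j,i]$ is rewritten as $-[i,j]$. Then
\[
	z + w = \sum_{i<j} a_{[i,j]}\, c_\Gamma(i,j).
\]
This is a $1$-chain supported on the \emph{curve edges} $[k-1,k]$, which lie in $\Cech_r(\Gamma)$ because $r>R_\Gamma$. It is a cycle, being a sum of cycles. The key observation is that the subcomplex of curve edges is a path graph (a tree) on the vertices $1,\dots,n$. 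A $1$-cycle on a tree is zero, and this holds over any coefficient ring: look at the leaf $1$, whose only incident edge is $[1,2]$, so the coefficient of $[1,2]$ must vanish, and induct along the path. Hence $z = -w = \psi\bigl(-\sum a_{[i,j]}(i,j)\bigr)$ on the chain level, and so $[z]$ lies in the image of $H_0(\psi_r)$.

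The degenerate edges need a little bookkeeping. Edges with $i=j$ do not occur in the simplicial complex. Edges $[i,i+1]$ give $\psi((i,i+1)) = 0$, which is harmless because they cancel inside the computation anyway. I also need to check that each vertex $(i,j)$ with $i<j$ and $\{i,j\}\in\Cech_r(\Gamma)$ is indeed a vertex $[i,i]\times[j,j]$ of $\DC_r(\Gamma)$; this holds because the condition in \cref{eq:distance-complex} reduces to $\{i,j\}\in\Cech_r(\Gamma)$ with $i\le j$.

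The main potential obstacle is only the reduction from a general homology class to a cycle in the $1$-skeleton, and this is automatic: $C_1$ of a simplicial complex consists exactly of the edge chains, so every $1$-cycle is already of the form used above, with no homotopy needed. The argument therefore reduces entirely to the tree observation for the curve graph. One point to state clearly is that the argument works at every fixed $r>R_\Gamma$ and for an arbitrary coefficient ring, with no field assumption required.
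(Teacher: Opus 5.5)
Your proposal is correct and follows essentially the same approach as the paper. Both arguments take the vertex $-\sum a_{[i,j]}(i,j)$ in $\DC_r(\Gamma)$ and observe that $\psi$ of it differs from the given cycle by a $1$-cycle supported on the path graph of curve edges, which must vanish. Both proofs work at the chain level over an arbitrary coefficient ring.
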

\begin{proof}
	Let
	\[
		c = \sum_{i=1}^{m} \alpha_i [k_i,l_i] \in C_1(\Cech_r(\Gamma)),
	\]
	be a cycle.
	After changing the signs of the coefficients if necessary, we may assume $k_i<l_i$ for all $i$.
	Set $v_i=(k_i,l_i)$ and
	$
		v = -\sum_{i=1}^{m} \alpha_i v_i \in C_0(\DC_r(\Gamma)).
	$
	Since $ [k_i,l_i]\in \Cech_r(\Gamma) $, the vertex $(k_i,l_i)$ lies in $\DC_r(\Gamma)$, and therefore $v\in C_0(\DC_r(\Gamma))$. 
	
	Let
	\[
		b = \sum_{i=1}^{m} \alpha_i c_\Gamma(k_i,l_i).
	\]
	By definition of $\psi_r$, we have
	\[
		\psi_r(v)
		=
		-\sum_{i=1}^{m}\alpha_i\left(c_\Gamma(k_i,l_i)-[k_i,l_i]\right)
		=
		c-b.
	\]
	Since $v$ is a 0-cycle and $\psi_r$ is a chain map, $\partial\psi_r(v)=0$.
	Together with $\partial c=0$, this implies $\partial b=0$.
	But $b$ is supported on the subcomplex of $ \Cech(\Gamma) $ that is the path graph with vertices $1,\dots,n$ and edges $[j-1,j]$ for $j=2,\dots,n$. 
	By definition of the \v{C}ech complex of the indexed family $\Gamma$, these are distinct vertices; therefore, the graph has no nontrivial 1-cycles.
	Therefore $b=0$, and hence $\psi_r(v)=c$.
	This proves that $H_0(\psi_r)$ is surjective.
\end{proof}

\begin{rem}\label{rem:distance-to-rips-vs-inclusion}
	The preceding theorem, together with \cref{prop:rdc-is-dc}, implies that the distance-to-Rips map induces a surjective morphism in degree zero.
	This is not the case for the map induced by the composition $ \DC(\Gamma) \xrightarrow{\psi} \Cech(\Gamma)\hookrightarrow \VR_{2\cdot}(\Gamma)$. In this sense, the distance-to-Rips map is better-behaved.
\end{rem}

\subsubsection*{Injectivity}
An obvious obstruction to injectivity of the morphism $ H_0(\psi) $ is that the kernel always contains the connected component of the diagonal of the distance matrix.
This can be fixed by considering reduced homology via the augmentation map that sends the diagonal component to a generator of the $(-1)$-chains. 

However, even the map in reduced homology is generally not injective.
A curve cycle is null-homologous in the geometric complex if it is the sum of the boundaries of triangles.
The corresponding connected component in the distance complex is connected to the diagonal if the cycle is the sum of boundaries of triangles that have a curve edge as one side.
This is precisely what happens in \cref{ex:close-return-dm-and-rips} at the third filtration value: The off-diagonal component is still present, but the 1-cycle is filled in the Vietoris--Rips complex.

Injectivity also fails for a more fundamental reason: Different segments of the time series can move in parallel around the same geometric feature.
This is precisely the difference between spatial and temporal homology that we want to leverage for time series analysis. 

\begin{figure}
	\centering
	\begin{subfigure}[t]{0.32\textwidth}
		\centering
		\includegraphics[width=\textwidth]{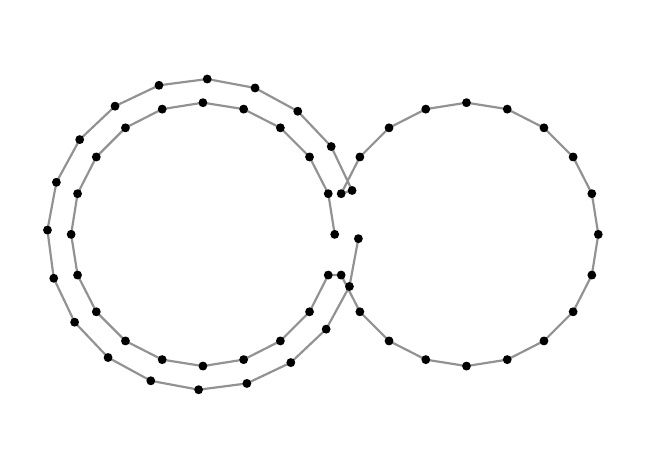}
		\caption{}
		\label{fig:two-left-turns-ts}
	\end{subfigure}
	\hfill
	\begin{subfigure}[t]{0.28\textwidth}
		\centering
		\includegraphics[width=\textwidth]{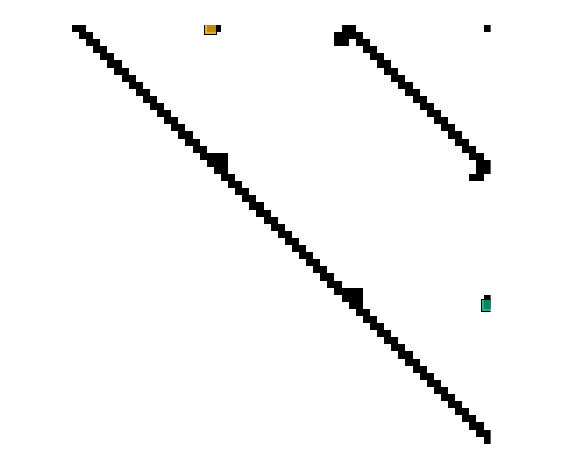}
		\caption{}
		\label{fig:two-left-turns-dm}
	\end{subfigure}
	\hfill
	\begin{subfigure}[t]{0.32\textwidth}
		\centering
		\includegraphics[width=\textwidth]{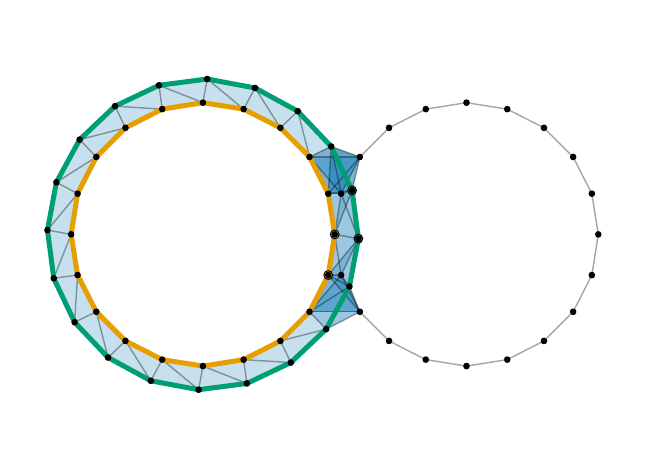}
		\caption{}
		\label{fig:two-left-turns-rips}
	\end{subfigure}
	\caption{
		An ordered point cloud with a left, right, left sequence of almost-closed loops.
		\textbf{a.} The sampled time series in $\R^2$.
		\textbf{b.} The corresponding sublevel set of the distance matrix at a radius larger than the maximum distance between consecutive samples and smaller than the distance between samples two steps apart.
		The orange and green pixels mark the close returns near matrix entries $(1,20)$ and $(41,60)$, respectively.
		\textbf{c.} The Vietoris--Rips complex at the same radius, with the induced cycles highlighted in the same colors.
	}
	\label{fig:two-left-turns-example}
\end{figure}

\begin{exmp}\label{ex:two-left-turns}
	Consider the ordered point cloud in \cref{fig:two-left-turns-example}.
	It visits the left hole twice, with a segment in between that moves around the right-hand hole. Each visit to the left hole creates an off-diagonal connected component in the Rips distance complex, visible in \cref{fig:two-left-turns-dm}.
	Under the distance-to-Rips map, these components map to homologous cycles in the Vietoris--Rips complex: both wind around the left hole.
\end{exmp}

\subsection{The Map in Degree One}

Since the distance complex is a finite cubical complex in $\R^2$, its homology vanishes in degrees greater than one.
Therefore, among the higher induced maps, only
\[
	H_1(\psi_r)\colon H_1(\DC_r(\Gamma))\to H_2(\Cech_r(\Gamma))
\]
can be nonzero.
The following examples indicate two phenomena: $H_1(\psi)$ can be nontrivial, but it is not generally surjective.
For simplicity, we consider the distance-to-Rips map $\tilde\psi$ in what follows.

\begin{figure}
	\centering
	\begin{subfigure}[t]{0.4\textwidth}
		\centering
		\includegraphics[width=.8\textwidth]{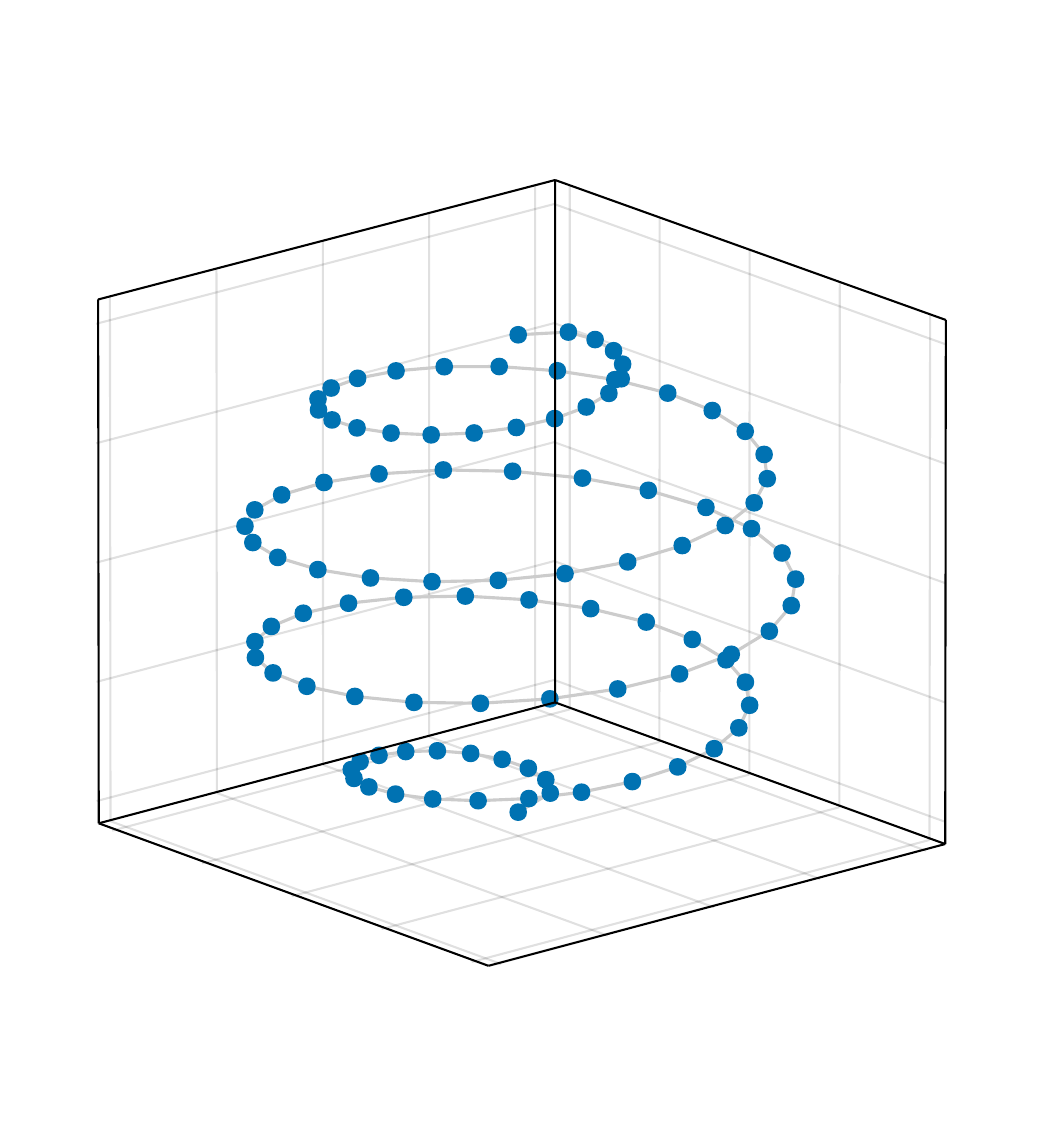}
		\caption{}
		\label{fig:s2-helix-ts}
	\end{subfigure}
	\begin{subfigure}[t]{0.28\textwidth}
		\centering
		\includegraphics[width=\textwidth]{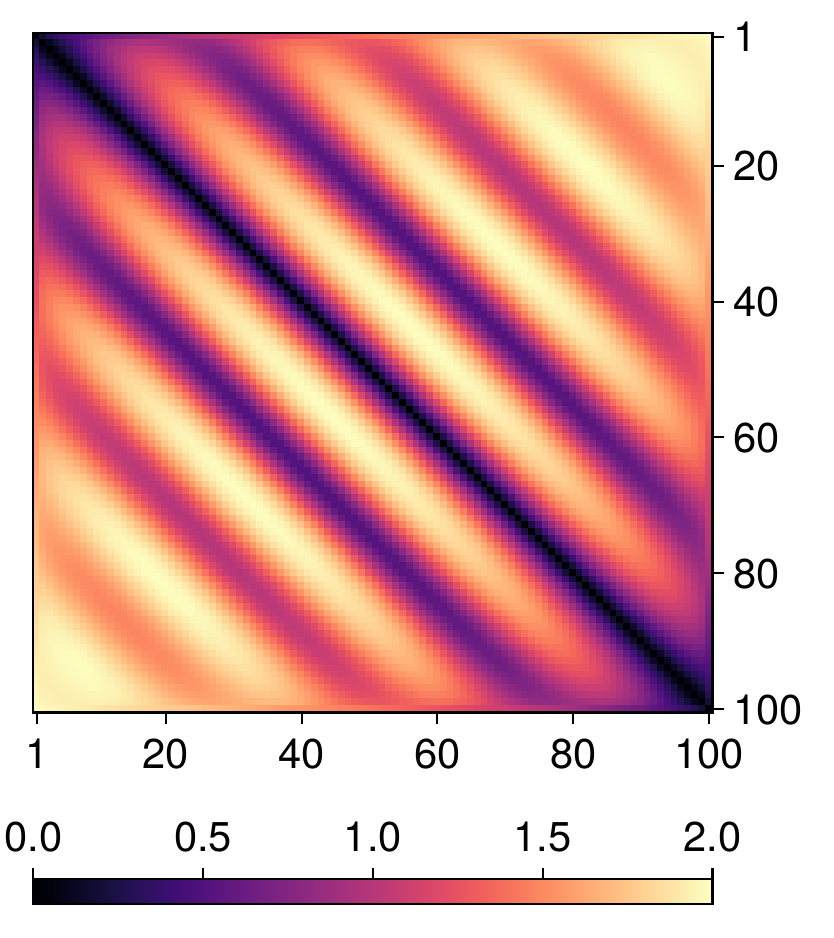}
		\caption{}
		\label{fig:s2-helix-dm}
	\end{subfigure}
	\hfill
	\begin{subfigure}[t]{0.28\textwidth}
		\centering
		\includegraphics[width=\textwidth]{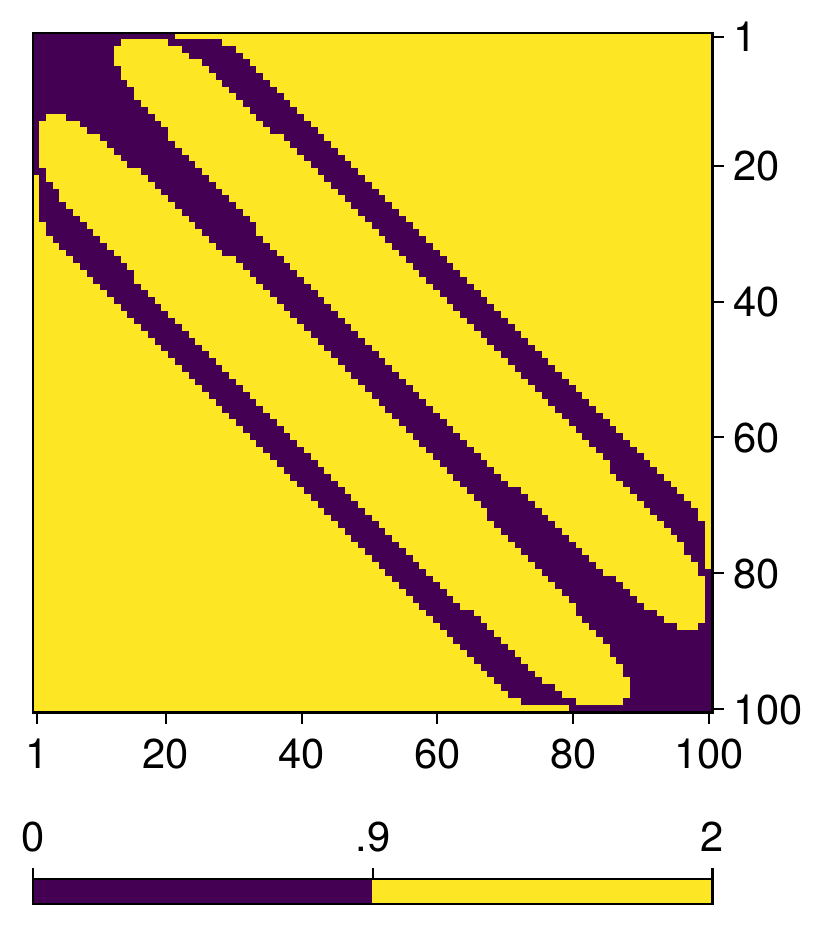}
		\caption{}
		\label{fig:s2-helix-dm-q}
	\end{subfigure}
	\caption{Spiral time series with distance matrix. 
		\textbf{a.} Time series sampled from a curve that spirals upward on the surface of the unit 2-sphere.
		\textbf{b.} Distance matrix as a heatmap. Note the ridges parallel to the diagonal.
		\textbf{c.} Sublevel set of \textbf{b} with nontrivial $H_1$.
	}
	\label{fig:s2-helix}
\end{figure}

\begin{exmp}[Nontrivial Induced Morphism]
	Consider the time series $\gamma$ in \cref{fig:s2-helix-ts}. 
	It lies on the unit 2-sphere centered at $(0,0,1)$ and spirals from the south pole to the north pole.
	Specifically, $\gamma=(\eta(t_k))_{k=1}^{100}$, where
	\[
		\eta(t)
		=
		\left(\sqrt{1-\left(\frac{t}{4\pi}-1\right)^2}\cos(t), 
			\sqrt{1-\left(\frac{t}{4\pi}-1\right)^2}\sin(t),
			\frac{t}{4\pi}
		\right),\qquad
		t_k=\frac{8\pi(k-1)}{99}.
	\]
	We now describe a computation that confirms $ \im H_1(\tilde\psi) \neq 0$.
	The code is available at \cite{paperGithubRepo}. We use Ripserer \cite{Ripserer2020} (a Julia implementation of the persistence software Ripser \cite{Bauer2021Ripser}) for persistent homology computations. In what follows, all computed values are rounded to three decimal places.
		
	The degree-two persistent homology of the embedded point cloud is
	\[
		H_2(\VR(\gamma)) \cong \mathbb{F}_{43}(0.899,1.650].
	\]
	For a field $\mathbb F$ and an interval $I\subset\R$, we write $\mathbb F I$ for the corresponding interval module.

	The distance matrix of $\gamma$ is visualized in \cref{fig:s2-helix-dm};
	the sublevel set in \cref{fig:s2-helix-dm-q} indicates nontrivial homology in degree one. 
	Indeed, we have
	\[
		H_1(\RDC(\gamma)) \cong \mathbb{F}_{43}(0.899,2.00] \oplus \mathbb{F}_{43}(1.35,2.00] \oplus \mathbb{F}_{43}(1.67,2.00] \oplus \mathbb{F}_{43}(1.94,2.00].
	\]

	We compute a cohomology generator $ \alpha $ corresponding to the bar in $ H_2(\VR(\gamma))$ and homology generators $v_1,v_2$ corresponding to the first two bars in $ H_1(\RDC(\gamma)) $. 
	The other two bars do not overlap with the $H_2$-bar, so we cannot evaluate the Kronecker pairing.
	The metric step size is $S_\gamma=0.254$, so $\tilde\psi_r$ is defined on the filtration values used here.
	A computation (using coefficients in $\F_{43}$) yields
	\[
		\alpha(\tilde\psi(v_1)) = -1,\quad \alpha(\tilde\psi(v_2)) = -1.
	\]
	If $ H_1(\tilde\psi) $ were trivial, all Kronecker pairings above would be zero.
	Therefore, $ H_1(\tilde\psi) $ is nontrivial. \qed
\end{exmp}

\begin{exmp}
	Let $\gamma = \{ x_k = (\cos(2\pi (k-1)/6), \sin(2\pi (k-1)/6)) \mid k = 1,\dots,6\}$ be the vertices of a regular hexagon.
	The distance matrix of $\gamma$ is
	\begin{center}
	\begin{tikzpicture}
			\begin{scope}[shift={(-3,0)}]
				\foreach \k in {1,...,6} {
					\coordinate (P\k) at ({1.25*cos(60*(\k-1))},{1.25*sin(60*(\k-1))});
					\fill (P\k) circle (2pt) node[above right] {\footnotesize $ x_{\k} $};
				}
			\end{scope}
			
			\begin{scope}[shift={(3,0)}]
				\node at (0,0) {$D = \begin{pmatrix}
						0 & 1 & \sqrt{3} & 2 & \sqrt{3} & 1 \\
						1 & 0 & 1 & \sqrt{3} & 2 & \sqrt{3} \\
						\sqrt{3} & 1 & 0 & 1 & \sqrt{3} & 2 \\
						2 & \sqrt{3} & 1 & 0 & 1 & \sqrt{3} \\
						\sqrt{3} & 2 & \sqrt{3} & 1 & 0 & 1 \\
						1 & \sqrt{3} & 2 & \sqrt{3} & 1 & 0
					\end{pmatrix}$};
			\end{scope}
		\end{tikzpicture}
	\end{center}
	A direct computation gives
	\[
	H_0(\RDC(\gamma);\mathbb F) \cong \mathbb F(0,\infty) \oplus \mathbb F(0,1]^{\oplus5} \oplus \mathbb F(1,2] \quad \text{ and }\quad H_1(\RDC(\gamma);\mathbb F) \cong 0.
	\]
	The map $\tilde\psi_r$ is defined for $r\geq \sqrt{3}$. 
	Furthermore, $H_1(\tilde\psi)$ is trivial since its domain is $H_1(\RDC(\gamma)) = 0$.
	Since $\VR_r(\gamma) \cong S^2$ for $ r\in (\sqrt{3},2] $ by \cite{Adamaszek2017}[Theorem 7.6], the map $H_1(\tilde\psi)$ fails to be surjective.
	Furthermore, this is another example where the kernel of $ H_0(\tilde\psi)$ contains more than the essential class of $ H_0(\RDC(\gamma)) $. \qedhere
\end{exmp}

It is not clear what curves generate nontrivial induced morphisms in degree one.
Torus knots may seem like candidates, but for equidistant samples of torus knots on flat tori, it is easy to see that the degree-one distance-to-Rips map is trivial.

\begin{exmp}
	Let $\Torus^2 = \R^2/\Z^2$ and suppose
	\[
		x_k = k \begin{pmatrix}
			p/n \\ q/n
		\end{pmatrix}, \qquad k = 1, 2, \dots, n,
	\] 
	is an equidistant sample of a $(p,q)$-torus knot.
	Then, for all $i$ and $j$,
	\[
		x_j-x_i =
		\begin{pmatrix}
			(j-i)p/n \\ (j-i)q/n
		\end{pmatrix}
		\in \Torus^2.
	\]
	Since the flat torus metric is translation invariant, the distance $d(x_i,x_j)$ depends only on $j-i \bmod n$.
	Equivalently, the distance matrix is circulant.
	The filtration of the Rips distance complex then has two different types of connected components: either isolated diagonals of vertices or diagonals of squares (potentially multiple next to each other).
	Neither type can have nontrivial $H_1$; therefore, $H_1(\RDC_r(\Gamma))=0$ for all $r$, and the degree-one distance-to-Rips maps are trivial.
\end{exmp}

\section{Application: Time Series Analysis}
\label{sec:application}

The distance complexes and associated maps are related to the cycling signature, a topological descriptor for time series segments.
The idea of the cycling signature is to decompose a recurrent invariant set into elementary recurrent motions.
The distance-to-offset map (and its variants) can be used to compute cycling signatures, but these maps also yield more detailed information.

Throughout this section, we consider homology with respect to a field.

\subsection{Background}

We introduce some background from \cite{Bauer2025cyclingSignatures}.
The idea of the cycling signature is to identify and classify certain almost-periodic segments in time series data, where almost-periodicity is quantified by a threshold distance $r \geq 0$.

Throughout this section, let $\Gamma = (x_1,\dots, x_n)$ be a time series in $\R^d$ with the Euclidean metric.
A segment of $\Gamma$ is a consecutive subsequence of $\Gamma$.
Fix a segment $\gamma=(x_i,\dots,x_j)$. 
We write
\[
	O_r^{c}(\gamma)=\{y\in\R^d\mid d(y,\gamma)\leq r\}
\]
for its closed $r$-offset.
It is \emph{$r$-cycling} if a thickening of $\gamma$ by $r$ has nontrivial first homology, i.e., if $H_1(O_r^{c}(\gamma)) \neq 0$.
Equivalently, the thickening has a nontrivial 1-cycle (and therefore an almost-closed subsegment as we explain later).

Cycling segments can be classified by their \emph{cycling signature}, which takes into account the homology classes of the thickened segment inside a so-called comparison space.

A neighborhood $Y$ of $\Gamma$ is called a \emph{comparison space}; we denote by $r_\Gamma(Y) > 0$ the maximal radius such that $O_{r}^{c}(\Gamma) \subseteq Y$ for all $r < r_\Gamma(Y)$.
Equivalently, every segment $\gamma$ of $\Gamma$ can be thickened by any $r<r_\Gamma(Y)$ without leaving $Y$. 
For every $0\leq r<r_\Gamma(Y)$, we have an inclusion map
$i_{\gamma,r}^{c}\colon O_r^{c}(\gamma)\hookrightarrow Y$.
The cycling signature of $\gamma$ at radius $r$ is
\[
    \Cyc_r(\gamma,Y)
    :=
    \im H_1(i_{\gamma,r}^{c})
    \subset H_1(Y).
\]
As the thickening radius increases, these subspaces are nested. 
Indeed, if $r\leq s<r_\Gamma(Y)$, then $ O_r^{c}(\gamma)\subseteq O_s^{c}(\gamma)\subseteq Y, $
and therefore $ \Cyc_r(\gamma,Y)\subseteq \Cyc_s(\gamma,Y). $

\begin{defn}\label{def:cycling-signature}
	The \emph{cycling signature} of a segment $\gamma$ of $\Gamma$ with respect to the comparison space $Y$ is the filtered vector space
	\[
		\Cyc(\gamma,Y)=\bigl(\Cyc_r(\gamma,Y)\bigr)_{0\leq r<r_\Gamma(Y)}.
	\]
\end{defn}

\begin{rem}
	\cref{def:cycling-signature} is slightly more general than the definition in \cite{Bauer2025cyclingSignatures} because 
	we do not require the lift to the unit tangent bundle. 
	More precisely, assume $\Gamma$ is a sampled trajectory of a vector field $f$ that never visits an equilibrium point.
	Given $x_k$, let $ v_k $ be the normalization of $ f(x_k) $. 
	Define $\rho(x_k) = (x_k,v_k)$.
	The cycling signature of $\gamma = (x_k,\dots, x_\ell)$, as defined in \cite{Bauer2025cyclingSignatures}, is the image of the map
	\[
		H_1(O(\rho(\gamma))\rightarrow \Delta Y),
	\]
	where $Y$ is a neighborhood of $\rho(\Gamma)$ and $\Delta Y$ denotes the constant filtration.
	Here, we separate the lifting step from the definition: the preceding definition applies to an arbitrary time series, and the original construction is recovered by applying it to $\rho(\Gamma)$.
\end{rem}

\subsection{Computation via the Distance Complex}

The cycling signature of a segment $\gamma$ is the image of the morphism
\[
	H_1(i_\gamma^{c})\colon H_1(O^{c}(\gamma)) \rightarrow H_1(\Delta Y),
\]
where $\Delta Y$ is the constant filtration of $Y$ and $i_\gamma^{c}$ denotes the inclusion map.

The \v{C}ech and distance filtrations developed above use strict inequalities, whereas the cycling signature just defined uses closed offsets.
For finite point clouds, it is easy to see that the distance-to-\v{C}ech map is still well-defined and induces a surjective morphism
\[
	H_0(\psi)\colon H_0(\DC^{c}(\gamma))\twoheadrightarrow H_1(\Cech^{c}(\gamma)).
\]
Using the functorial nerve theorem \cite[Theorem A]{Roll2023unified}, the cycling signature is the image of the composition
\[
	H_0(\DC^{c}(\gamma))
	\xrightarrow{H_0(\psi)}
	H_1(\Cech^{c}(\gamma))
	\xrightarrow{\simeq}
	H_1(O^{c}(\gamma))
	\xrightarrow{H_1(i_\gamma^{c})}
	H_1(\Delta Y).
\]

\subsubsection*{Algorithm and Implementation}
We first give an overview of the algorithm in \cite{Bauer2025cyclingSignatures} as implemented in \cite{Hien2024}.
There, the comparison space $Y$ is constructed as a cubical cover of $\Gamma$.
The map $H_1(\Cech^{c}(\gamma)) \rightarrow H_1(\Delta Y)$ induced by $i_\gamma^c$
is computed via a chain map of finite-dimensional chain complexes
\[
	\phi_\gamma\colon C(\Cech^{c}(\gamma))\to C(\Delta Y).
\]
The chain map is computed using the acyclic carrier that assigns each simplex of $\Cech^{c}(\gamma)$ to the cubes of the cover containing it. 
Details of this can be found in \cite{Bauer2025cyclingSignatures}, but they are not important for the following discussion.

Given an implementation of the chain map $\phi_\gamma$, cycling signatures are computed as follows.
In a preprocessing step, a basis $\alpha_1,\dots,\alpha_m$ of $H^1(Y)$ is computed.
For each segment, the first step is to produce a list of homology classes $[z_1],\dots,[z_N]$ in $H_1(\Delta Y)$, sorted by birth, that generate the cycling signature; this is where the two algorithms differ.
The cycling signature is obtained by reducing the coordinate matrix
\[
	M_{p,q}=\alpha_p(z_q)
\]
via the standard persistence reduction.
The nonzero columns form a basis of $\Cyc(\gamma,Y)$. 

For the computations below, we replace $\Cech_r^c(\gamma)$ by $\VR_{2r}^c(\gamma)$.
Note that the 1-skeleta of the two complexes are the same.
One can show that this yields the same cycling signature provided $r$ is less than the box size of the comparison space.
We describe two methods for obtaining the $z_i$:

\emph{Direct method.}
Compute generators $b_1,\dots,b_n$ of $H_1(\VR^c(\gamma))$, sorted by birth, and set
\[
	z_i=H_1(\phi_\gamma)(b_i).
\]

\emph{Distance matrix method.}
Compute generators $v_1,\dots,v_k$ of $H_0(\RDC^c(\gamma))$, sorted by birth, and set
\[
	z_i=H_1(\phi_\gamma)\bigl(H_0(\tilde\psi)(v_i)\bigr).
\]

The two methods differ in runtime and output. 
Both compute the cycling signature, but the cycles produced by the distance matrix method can be used for the additional analyses in the next section.

\subsubsection*{Runtime Considerations}
From a runtime perspective, the distance matrix method has the advantage that no reduction of a degree-one coboundary matrix is necessary.
The direct method uses standard persistence reduction for $H_1(\VR^{c}(\gamma))$, whose worst-case complexity is cubic in the number of simplices in the relevant filtration \cite{Morozov2005cubic}.
Write $\ell$ for the number of samples in the segment $\gamma$.
The distance matrix method instead computes $H_0(\RDC^{c}(\gamma))$ by connected components in a graph with $\ell(\ell+1)/2$ vertices and $O(\ell^2)$ grid edges after the distance values have been computed.
The direct method has the advantage that $H_1(\phi_\gamma)\colon H_1(\VR^{c}(\gamma)) \rightarrow H_1(\Delta Y)$ is evaluated less often, since a basis for $H_1(\VR^{c}(\gamma))$ contains at most as many elements as a generating set.

\subsubsection*{Experiments}
\Cref{fig:lorenz-dadras-runtime} shows mean runtimes for the direct method and the distance matrix method on $500$ segments for each of the segment lengths $10,20,\dots,500$ sampled from the Lorenz and Dadras systems. The code is available in the accompanying repository \cite{paperGithubRepo}.

Using our implementation, we observe that the distance matrix method is faster than the direct method for the Lorenz system. 
The situation is less clear for the Dadras system, as the direct method starts to outperform the distance matrix method for longer segments
even though, especially for longer segments, the computation of $H_0(\RDC^{c}(\gamma))$ is significantly faster than the computation of $H_1(\VR^{c}(\gamma))$.
The reason for this is that the current implementation computes
$H_1(\phi_\gamma)(H_0(\tilde\psi)(w))$ for every degree-zero persistence generator $w$ of $H_0(\RDC^{c}(\gamma))$.
This overhead can potentially be mitigated by stopping the computation once the computed image spans the homological comparison space, since the remaining elements do not influence the cycling signature.

\begin{figure}[ht]
	\centering
	\includegraphics[width=\textwidth]{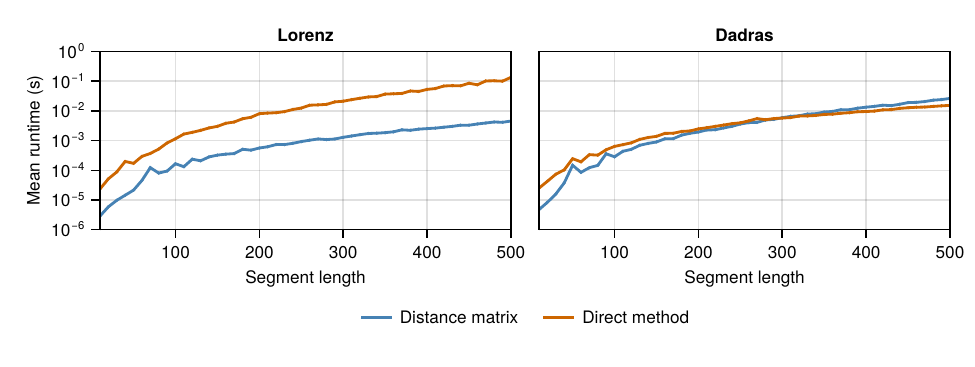}
	\caption{Mean runtime of the direct and distance matrix methods for segments sampled from the Lorenz and Dadras systems.}
	\label{fig:lorenz-dadras-runtime}
\end{figure}

\subsection{Transitions via Cycling Signatures and Temporal Cycles}

The images of vertices under the distance-to-\v{C}ech map are cycles that follow the sampled trajectory with exactly one exception: a ``jump''.
They can therefore be interpreted as almost-periodic segments.

\begin{defn}
	Let $\Gamma$ be an ordered point cloud.
	A \emph{temporal cycle} of a segment $\gamma$ of $\Gamma$ is a 1-cycle that is the image of a vertex $(i,j)$ under the distance-to-\v{C}ech map.
\end{defn}

Temporal cycles can provide a more refined description of the recurrent dynamics than the cycling signature alone. 
We demonstrate this by analyzing transitions between cycling motions in a three-wing chaotic attractor.
For computational purposes, we will consider temporal cycles with respect to the distance-to-Rips map.

\paragraph{Lü3 System}
We consider the threefold-symmetric extension of the L\"u system introduced in \cite{Anastassiou2008}.
It is given by
\begin{align}\label{eq:lu3-1}
	\dot{p}
	&=
	\frac{cp}{3}
	+\frac{(c+a)(q^2-p^2)+2pq(a-z)}{3M}
	-\frac{a}{3}(p-q)
	+\frac{qz}{3},\\
	\dot{q}
	&=
	\frac{(c-a)q-(a+z)p}{3}
	+\frac{(a-z)(p^2-q^2)+2pq(a+c)}{3M}, 	\label{eq:lu3-2} \\
	\dot{z}
	&=
	\frac{3}{2}p^2q-\frac{1}{2}q^3-bz,	\label{eq:lu3-3}
\end{align}
where $M=\sqrt{p^2+q^2}$ and $(a,b,c)=(36,3,13)$.
We numerically integrate the system over the interval $[0,10^4]$ using the adaptive Tsit5 Runge--Kutta scheme
\cite{TSITOURAS2011770}, starting from $(1,0,4.5)$, with absolute and relative tolerances $10^{-6}$. We sample the resulting trajectory at intervals of $\Delta t=0.01$. 

\begin{figure}[H]
	\centering
	\includegraphics[width=.75\linewidth]{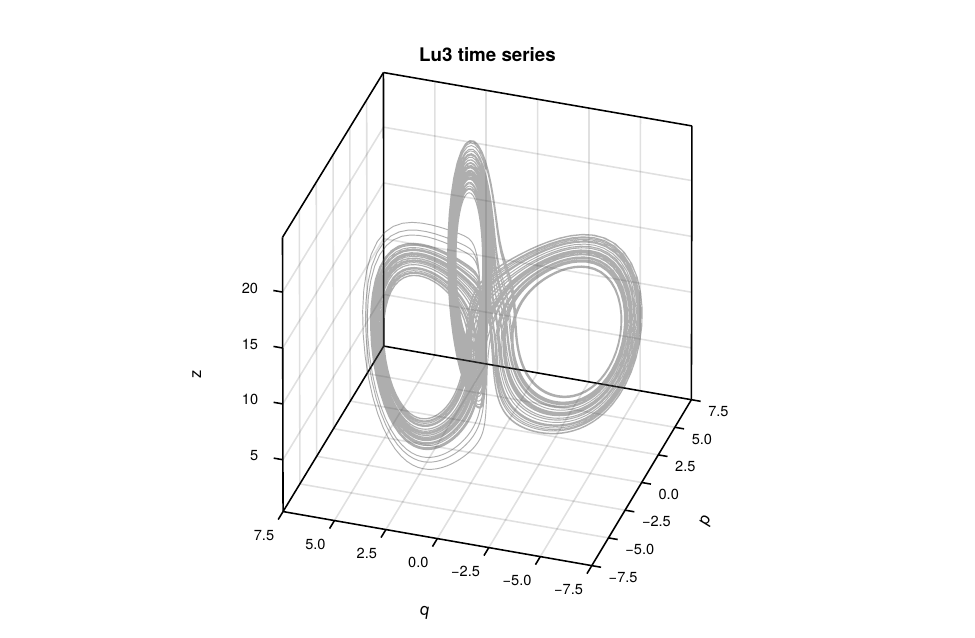}
	\caption{Numerically sampled trajectory of the Lü3 system in $(p,q,z)$-space.}
	\label{fig:lu3-time-series}
\end{figure}

\paragraph{Cycling Signatures Setup}
We follow the procedure outlined in \cite{Bauer2025cyclingSignatures}.
We lift the time series to the unit tangent bundle via
\[
	\Gamma_{utb} = \left( (x_1,v_1),\dots, (x_n,v_n) \right),
\]
where $ v_k $ is the normalized tangent vector of the vector field defined by \cref{eq:lu3-1,eq:lu3-2,eq:lu3-3} at $x_k$.
For thickenings in the unit tangent bundle, we use the dynamic distance
\[
	d_{dyn}((x,v), (y,w)) = \max\{  \norm{x-y}_2, 5\norm{v-w}_2 \}.
\]
We take as comparison space
\[
	Y = | \{ Q\mid Q = 5\,Q_1\times Q_2,\text{ where } Q_1,Q_2\in \Cub_3(\R^3) \text{ and } Q\cap \Gamma_{utb}\neq \emptyset \} |,
\]
i.e., every box is in $\R^6$, the first three components being the spatial coordinates and the last three components being the unit tangent coordinates.
We compute $ \dim H^1(Y;\F_{43}) = 3$.
To obtain cycling signatures, we sample $1000$ segments for each segment length in $\mathcal{L} = \{ 10,20,\dots, 500\}$.
This yields a total of $50\times 1000$ cycling signatures.

\paragraph{Cycling Signatures Analysis}
For simplicity, we restrict our analysis to the cycling signatures at a fixed radius $r=3.0$.
By a \emph{cycling space} we mean a subspace of $ H_1(Y) $.
We find three frequent one-dimensional cycling spaces $V_1,V_2,V_3$ and three frequent two-dimensional cycling spaces $W_1,W_2,W_3$, each containing a pair of the one-dimensional spaces; see \cref{fig:lu3-cycling-spaces}.
The three one-dimensional spaces suggest that there are three types of cycling motions. 
The three two-dimensional spaces suggest that there are transitions between the three types of cycling motions.
However, it is not clear what transitions actually occur.
For example, $W_1$ could contain transitions between $V_1$ and $V_3$ in one or both directions.

\begin{figure}[H]
	\centering
	\includegraphics[width=.95\linewidth]{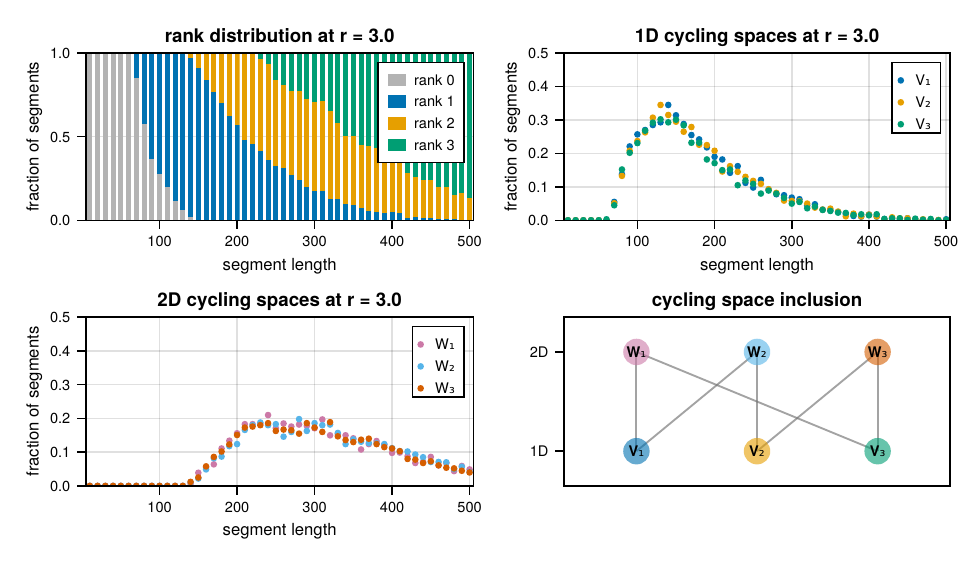}
	\caption{Cycling signatures for segments of the Lü3 trajectory at $r=3.0$. The top left panel shows the distribution of cycling ranks by segment length; the remaining panels show the three frequent one-dimensional spaces $V_i$, the three frequent two-dimensional spaces $W_i$, and their inclusion relations.}
	\label{fig:lu3-cycling-spaces}
\end{figure}

\paragraph{Transition Analysis}
For each sampled segment whose cycling space at radius $r=3.0$ is one of $W_1,W_2,W_3$, 
we compute the connected components of the distance complex to identify the temporal cycles occurring within the segment.
We then map the temporal cycles into the comparison space $Y$ and compare their images with $V_1,V_2,V_3$ to determine which type of cycling motion each temporal cycle represents.
To record transitions, we use the following notions.

\begin{defn}
	Let $v=(i,j)$ and $v'=(i',j')$ be vertices of $\DC_r(\Gamma)$, with temporal cycles $z = \psi(v)$ and $z' = \psi(v')$.
	We call $[i,j]$ the \emph{time domain} of $z$ and say that $z$ \emph{precedes} $z'$ if $i<i'$ and $j<j'$.
	In this case, $z'$ \emph{succeeds} $z$.
\end{defn}

We then classify all segments with cycling space $W_k$ according to the temporal cycles they contain.
Consider a segment with cycling space $W_k=\operatorname{span}(V_a,V_b)$.
We record a transition $V_a\to V_b$ if the segment contains a temporal cycle representing $V_a$ that precedes a temporal cycle representing $V_b$. The reverse transition is defined in the same way.
We classify the segment as bidirectional if both orderings occur and unresolved if neither ordering can be established.
We perform this classification separately for every sampled segment and count the resulting transition types for every segment length. Representative temporal cycles for segments of length $250$ are shown in \cref{fig:lu3-transition-distance-matrices}.

\begin{figure}[H]
	\centering
	\includegraphics[width=\linewidth]{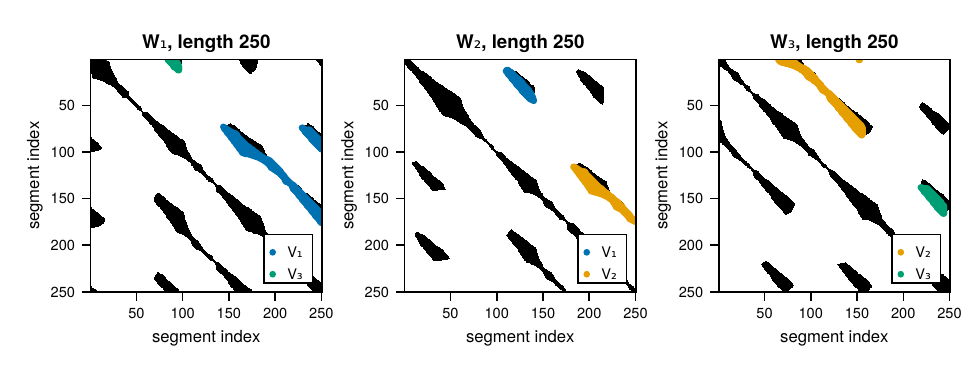}
	\caption{Temporal cycles in length-$250$ segments with cycling spaces $W_1$, $W_2$, and $W_3$.
	The grayscale background shows the distance matrix, while the colored components at $r=3.0$ represent the indicated one-dimensional cycling spaces.}
	\label{fig:lu3-transition-distance-matrices}
\end{figure}

The results show a strong directional asymmetry.
Of the $4458$ segments realizing $W_1$, $4296$ exhibit the transition
$V_3\to V_1$, while $162$ are unresolved.
Of the $4387$ segments realizing $W_2$, $4205$ exhibit $V_1\to V_2$, while
$182$ are unresolved.
Finally, of the $4324$ segments realizing $W_3$, $4131$ exhibit
$V_2\to V_3$, while $193$ are unresolved.
We observe no transitions in the opposite directions and no bidirectional
segments.
Thus, the three types of cycling motion are connected by the directed cycle
\[
	V_1\longrightarrow V_2\longrightarrow V_3\longrightarrow V_1.
\]
The transition counts by segment length and their dominant directions are shown in \cref{fig:lu3-transition-directions}.

\begin{figure}[H]
	\centering
	\includegraphics[width=\linewidth]{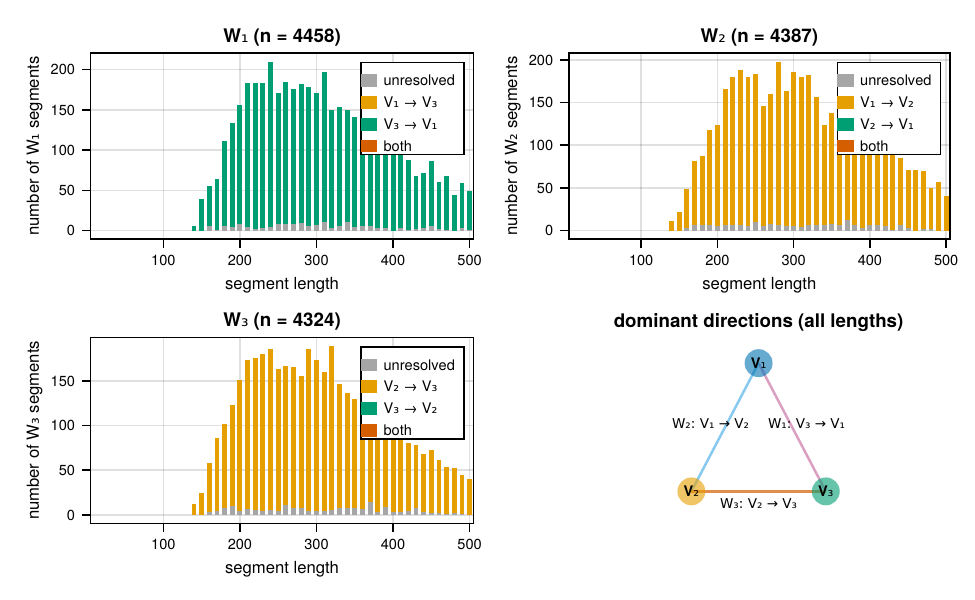}
	\caption{Transition directions for segments with cycling spaces $W_1$, $W_2$, and $W_3$. The first three panels show the transition classifications by segment length; the final panel summarizes the dominant directions over all lengths.}
	\label{fig:lu3-transition-directions}
\end{figure}

\subsection{Distance Complexes, Cycling Signatures, and Recurrence Plots}

Recurrence plots, cycling signatures, and distance complexes, together with their associated maps, provide closely related but distinct tools for analyzing dynamics from time series data.
A recurrence plot requires only a distance on the data and, once the pairwise distances are available, is independent of the ambient dimension.
It gives an inexpensive overview of recurrences at a chosen scale and has found numerous applications.
Although its visible patterns can be highly informative, their usual geometric interpretation is largely heuristic.

Computing cycling signatures additionally requires a comparison space $Y$ whose topology captures features regarded as dynamically relevant; the original algorithm itself requires no sampling condition.
The subsampling pipeline of \cite{Bauer2025cyclingSignatures} then identifies and classifies cycling motions, and the resulting signature gives a rigorous lower bound on the homological complexity of a segment relative to $Y$.
If the time series is sampled finely compared with the relevant holes in $Y$, the maps developed here recover temporal cycles and thereby enable the transition analysis above.
The main limitation is the need to construct a suitable comparison space and compute its cohomology, which can be costly in high dimensions.

Working directly with the distance complex avoids this requirement and turns the recurrence-plot filtration into an object amenable to persistent homology.
As demonstrated in \cite{Ichinomiya2023,Ichinomiya2025}, its persistence can be used for time series classification independently of the cycling-signature framework.
The distance complex requires no sampling condition and, once the distance matrix is known, is independent of the ambient dimension.
This raises the question of whether the distance-to-\v{C}ech and distance-to-Rips maps can augment such pipelines by attaching temporal and geometric information to features detected in the distance complex.

\section*{Acknowledgments}
Parts of this research were conducted while the author was a doctoral student at the Technical University of Munich and were supported by the German Research Foundation (DFG--Deutsche Forschungsgemeinschaft, TRR 109, Project ID 195170736).

\bibliographystyle{alpha}
\bibliography{refs}

\end{document}